\newtheorem{theorem}{Theorem}
\theoremstyle{plain}
\newtheorem{question}[theorem]{Question}
\newtheorem{corollary}[theorem]{Corollary}
\newtheorem{definition}[theorem]{Definition}
\newtheorem{example}[theorem]{Example}
\newtheorem{lemma}[theorem]{Lemma}
\newtheorem{proposition}[theorem]{Proposition}
\newtheorem{remark}[theorem]{Remark}
\newcommand{\Z}{\mathbb Z}
\newcommand{\Q}{\mathbb Q}
\newcommand{\PP}{\mathbb P}
\newcommand{\C}{\mathbb C}
\newcommand{\N}{\mathbb N}
\newcommand{\CP}{\mathbb P}
\newcommand{\Pic}{\operatorname{Pic}}
\newcommand{\NS}{\operatorname{NS}}
\newcommand{\dashedlongrightarrow}{\xymatrix@1@=15pt{\ar@{-->}[r]&}}
\renewcommand{\longrightarrow}{\xymatrix@1@=15pt{\ar[r]&}}
\renewcommand{\mapsto}{\xymatrix@1@=15pt{\ar@{|->}[r]&}}
\renewcommand{\twoheadrightarrow}{\xymatrix@1@=15pt{\ar@{->>}[r]&}}
\newcommand{\hooklongrightarrow}{\xymatrix@1@=15pt{\ar@{^(->}[r]&}}
\newcommand{\congpf}{\xymatrix@1@=15pt{\ar[r]^-\sim&}}
\renewcommand{\cong}{\simeq}
\begin{document} 
\title[K\"ahler structures on spin $6$-manifolds]{K\"ahler structures on spin $6$-manifolds} 

\author{Stefan Schreieder}
\address{Mathematical Institute of the University of Bonn, Endenicher Allee 60, D-53115 Bonn, Germany.} 
\email{schreied@math.uni-bonn.de}

\author{Luca Tasin} 
\address{Universit\`a Roma Tre, Dipartimento di Matematica e Fisica, Largo San Leonardo Murialdo I-00146 Roma, Italy} 
\email{ltasin@mat.uniroma3.it}

\date{September 8, 2017; \copyright{\ Stefan Schreieder and Luca Tasin 2017}}
\subjclass[2010]{primary 14F45, 32Q15, 57R15; secondary 14E30, 57R20} 


\keywords{Topology of algebraic varieties, K\"ahler manifolds, spin manifolds, Chern numbers, minimal model program.}

\begin{abstract}    
We show that many spin $6$-manifolds have the homotopy type but not the homeomorphism type of a K\"ahler manifold. 
Moreover, for given Betti numbers, there are only finitely many deformation types and hence topological types of  smooth complex projectve spin threefolds of general type.  
Finally, on a fixed spin $6$-manifold, the Chern numbers take on only finitely many values on all possible K\"ahler structures. 
\end{abstract}

\maketitle
\section{Introduction}

A classical question in complex algebraic geometry asks which smooth manifolds carry a complex projective or a K\"ahler structure.
Once we know that at least one such structure exists, the next natural question is to ask how many there are.
In this paper we address both questions in the case where the underlying smooth manifold is a spin $6$-manifold.
The condition for a $6$-manifold $M$ to be spin depends only on its homotopy type; if $M$ carries a complex structure $X$, then $M$ is spin if and only if the mod $2$ reduction of $c_1(X)$ vanishes.   

The spin condition, though of a purely topological nature, turns out to put strong restrictions on the bimeromorphic geometry of K\"ahler manifolds.
This makes that class of manifolds particularly accessible to the methods of the minimal model program, established for K\"ahler threefolds only very recently by H\"oring and Peternell  \cite{HP15,HP16}.
We will see that this leads to surprisingly strong topological constraints on this class of K\"ahler manifolds; our main results are Theorems \ref{thm:b_3=0}, \ref{thm:deformations} and \ref{thm:w2=0} below. 

\subsection{Many spin $6$-manifolds are non-K\"ahler but have K\"ahler homotopy type} 
Any closed spin $6$-manifold carries an almost complex structure, see Section \ref{subsubsec:almostcxstr}.
Moreover, a conjecture of Yau predicts that actually any closed spin $6$-manifold admits a complex structure, see \cite[p.\ 6]{kotschick-survey} and \cite[Problem 52]{yau-problem}.
On the other hand, our first main result produces many examples that do not carry any K\"ahler structure.
The interesting feature of our result is that it holds for a large class of prescribed homotopy types.

\begin{theorem} \label{thm:b_3=0}
Let $X$ be a simply connected K\"ahler threefold with spin structure and $H^3(X,\Z)=0$. 
Then there are infinitely many pairwise non-homeomorphic closed spin $6$-manifolds $M_i$ with the same oriented homotopy type as $X$, but which 
are not homeomorphic to a K\"ahler manifold.
\end{theorem}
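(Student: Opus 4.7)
The plan is to construct the $M_i$ via Wall's topological classification of simply connected spin $6$-manifolds, and then to rule out K\"ahler structures on all but finitely many of them by invoking the K\"ahler minimal model program of H\"oring--Peternell.

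For the construction, I would use the classification by Wall (refined by Jupp and Zhubr) of simply connected closed $6$-manifolds with torsion-free cohomology: in the spin case, such a manifold $M$ is determined up to oriented diffeomorphism by the quadruple $(H^2(M,\Z),\, b_3(M),\, \mu_M,\, p_1(M))$, where $\mu_M \colon H^2(M,\Z)^{\otimes 3} \to \Z$ is the trilinear cup product form. Two such manifolds are oriented homotopy equivalent iff the first three invariants agree and the Pontryagin classes match modulo a prescribed subgroup of $H^4$. I would pick a nonzero $\eta \in H^4(X,\Z)$ in this homotopy-invariant subgroup and realize, for each $i \in \Z$, a spin $6$-manifold $M_i$ with invariants $(H^2(X),\, 0,\, \mu_X,\, p_1(X) + i\,\eta)$; Wall's existence theorem supplies such an $M_i$ since the only obstruction is a compatibility between $\mu$ and $p_1$ which is preserved by addition of $\eta$. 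All the $M_i$ share the oriented homotopy type of $X$ by construction, and they are pairwise non-homeomorphic, as $p_1$ is a homeomorphism invariant of simply connected closed $6$-manifolds with torsion-free cohomology.

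For the obstruction, suppose $M_i$ is homeomorphic to a K\"ahler threefold $Y_i$; then $p_1(M_i) = c_1(Y_i)^2 - 2 c_2(Y_i)$ in $H^4(M_i,\Z) \cong \Hom(H^2(X,\Z),\Z)$. The hypotheses $\pi_1(X) = 1$ and $H^3(X,\Z) = 0$ force $h^{1,0}(Y_i) = h^{2,1}(Y_i) = h^{3,0}(Y_i) = 0$, so Riemann--Roch yields $c_1(Y_i) \cdot c_2(Y_i) = 24(1 + h^{0,2}(Y_i))$, which takes only finitely many values since $h^{0,2}(Y_i) \leq b_2(X)/2$. The goal is then to show via the K\"ahler MMP of H\"oring--Peternell \cite{HP15, HP16}, together with the spin condition $c_1 \in 2 H^2$, that the lattice class $c_1(Y_i) \in H^2(X,\Z)$ itself lies in a finite subset, and similarly for $c_2(Y_i) \in H^4(X,\Z)$. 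This would pin $p_1(M_i)$ to a finite set, so all but finitely many $M_i$ are non-K\"ahler.

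The main obstacle is this last lattice-level bound. While bounding the numerical Chern invariants $c_1^3$ and $c_1 c_2$ is essentially standard, bounding the classes $c_1(Y_i)$ and $c_2(Y_i)$ themselves demands finer control. The natural strategy is a case analysis on the Kodaira dimension of $Y_i$ -- running MMP to produce either a minimal model or a Mori fiber space -- and exploiting the spin hypothesis (which endows $c_1$ with a canonical square root) together with the vanishing $b_3 = 0$ (which restricts the possible fibration structures) to rigidify $K_{Y_i}$ as a lattice element, not merely as a numerical invariant. Making this work is the core of the argument.
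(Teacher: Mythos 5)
Your construction of the $M_i$ is essentially the paper's: use Wall's classification to vary $p_1$ while keeping $(H^2,F,b_3=0)$ fixed, use Zhubr's result that the homotopy type only sees $p_1$ modulo $48$, and separate the $M_i$ by Novikov's theorem. But the second half of your argument, which you yourself flag as ``the core of the argument,'' is not carried out, and the route you propose for it cannot work. You aim to show that the lattice classes $c_1(Y_i)\in H^2(X,\Z)$ and $c_2(Y_i)\in H^4(X,\Z)$ of any K\"ahler structure on $M_i$ lie in a finite set, and to deduce from $p_1=c_1^2-2c_2$ that $p_1(M_i)$ is bounded. This lattice-level boundedness of $c_1$ is false: Proposition \ref{prop:dolgachev} of the paper exhibits a single simply connected spin $6$-manifold (a $\CP^1$-bundle over a Dolgachev surface, which even has $b_3=0$) carrying K\"ahler structures $X_q$ with \emph{unbounded} $c_1(X_q)\in H^2(M,\Z)$. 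Only the combinations $c_1c_2$, $c_1^3$ and $c_1^2-2c_2$ are controlled there, not $c_1$ itself, so no MMP case analysis will pin $c_1(Y_i)$ to a finite subset of the lattice.

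The paper avoids this by never trying to bound $c_1$: it builds the needed contradictions into the \emph{choice} of the perturbed Pontryagin class. Two features of that choice are essential and absent from your setup. First, $p_1(M_r)=r(p_1(X)+\omega)$ is divisible by $r$; this rules out blow-downs (the $p_1$ of a blow-up at a point has bounded divisibility, so Theorem \ref{thm:MMP} forces $X_r$ minimal or a Mori fibre space), rules out Fano (boundedness of Fano threefolds), and rules out $\operatorname{kod}\geq 0$ (there $c_1^3=0$, so $p_1c_1=-2c_1c_2=-48-48h^{2,0}$ is nonzero and bounded, yet divisible by $r$). Your classes $p_1(X)+i\eta$ have no such divisibility. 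Second, $\omega$ is chosen \emph{general} with respect to the rational cohomology ring: in the remaining Mori fibre space cases, $p_1$ is forced to vanish on, or lie in, certain lines and hyperplanes determined up to finite ambiguity by the cubic form (for conic bundles this is Lemma \ref{lem:p1(Mfs)}: $p_1\in f^*H^4(S,\Z)$), and genericity of $\omega$ makes this impossible. Choosing $\eta$ merely nonzero does not guarantee the affine line $p_1(X)+\Q\eta$ avoids these finitely many subvarieties. So as written your proposal has a genuine gap exactly where the real work lies, and the strategy you sketch for closing it is obstructed by the paper's own examples.
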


To the best knowledge of the authors, the above theorem produces the first examples of manifolds with $b_2>1$ that are homotopy equivalent but not homeomorphic to K\"ahler manifolds. 
This is interesting because most known topological constraints implied by the K\"ahler condition (e.g.\ formality \cite{deligne-etal}, restrictions on the fundamental group \cite{amoros-etal}, or constraints on the cohomology algebra that come from Hodge theory)  
are only sensitive to the homotopy type, but not to the homeomorphism type.  

Simple examples to which Theorem \ref{thm:b_3=0} applies are the blow-up of $\CP^3$ in finitely many points, the product of a K3 surface with $\CP^1$, 
or, more generally, the $\CP^1$-bundle $\CP(K_S\oplus \mathcal O_S)$ over any simply connected K\"ahler surface $S$; 
the surface $S$ may very well be non-spin. 

Previously, only rather specific examples of non-K\"ahler manifolds with K\"ahler homotopy type had been found. 
For example, Libgober and Wood \cite{LW90} proved that the homotopy type of $\CP^n$ with $n\leq 6$ determines uniquely the K\"ahler structure.
This proves Theorem \ref{thm:b_3=0} in the special case $X=\CP^3$, because it is well-known that there are infinitely many pairwise non-homeomorphic $6$-manifolds that are homotopy equivalent to $\CP^3$. 
A related result of Hirzebruch--Kodaira \cite{HK57} and Yau \cite{yau} asserts that for any $n$, the homeomorphism type of $\CP^n$ determines the K\"ahler structure uniquely.

\subsection{Almost no spin $6$-manifold is a complex projective variety of general type} \label{subsec:bounded} 
In analogy with the classification of curves or surfaces, one expects that most K\"ahler structures are of general type.
Any K\"ahler structure of general type is projective and it is natural to ask which smooth manifolds carry such a structure. 

Our next result is the following strong finiteness statement. 

\begin{theorem}\label{thm:deformations}
There are only finitely many deformation types of smooth complex projective spin threefolds of general type and with bounded Betti numbers. 
\end{theorem}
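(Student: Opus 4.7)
The plan is to show that smooth complex projective spin threefolds of general type with bounded Betti numbers form a bounded family, from which finiteness of deformation types is immediate. The key step is to bound all three Chern numbers of such $X$, after which standard MMP boundedness results apply. Two Chern numbers are controlled directly by the topology: $c_3(X)=\sum(-1)^ib_i(X)$ is bounded by hypothesis, and Noether's formula $c_1(X)\cdot c_2(X)=24\chi(\OO_X)$ combined with the Hodge estimate $h^{0,i}\le b_i$ bounds $c_1\cdot c_2$.

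The essential work is to bound $K_X^3=-c_1^3(X)$, which is not birationally invariant. I would run the MMP to obtain a birational map $X\dashrightarrow X_{\min}$ to a $\Q$-factorial terminal minimal model with $K_{X_{\min}}$ nef and big. For $X_{\min}$, Miyaoka's inequality for terminal threefolds of general type, combined with the bound on $\chi(\OO_{X_{\min}})=\chi(\OO_X)$, bounds the canonical volume $K_{X_{\min}}^3$. To pass from this to a bound on $K_X^3$, one analyzes the MMP one step at a time. Divisorial contractions drop the Picard number by one while flips preserve it, so any MMP run from $X$ involves at most $\rho(X)\le b_2(X)$ divisorial contractions. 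Crucially, the spin hypothesis severely restricts the types of extremal contractions and flips that may occur, following the structural MMP analysis the present paper carries out for spin K\"ahler threefolds, so each step contributes at most a bounded amount to $K^3$; the total discrepancy $K_X^3-K_{X_{\min}}^3$ is therefore bounded.

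With all Chern numbers of $X$ bounded, boundedness of canonical models of general type with bounded volume (Hacon--McKernan--Xu) yields a bounded family of canonical models; a bounded family of minimal models and their smooth birational models $X$ is recovered using the bounded number of MMP steps together with the spin restrictions on admissible blow-up centers. The main obstacle is the MMP analysis in the previous paragraph: tightly controlling the change in $K^3$ under each divisorial contraction and flip in the spin setting, and ensuring that the singularities of $X_{\min}$ remain of bounded type. Once this is in hand, finiteness of deformation types follows from standard boundedness arguments.
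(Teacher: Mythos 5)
Your overall skeleton (MMP to a minimal model, Miyaoka--Yau to bound the volume, then a boundedness argument) matches the paper's, but there are two genuine gaps. The first is the one you yourself flag as "the main obstacle": controlling the MMP steps. In the spin case this is not merely a matter of bounding each step's contribution to $K^3$ --- the paper's Theorem \ref{thm:MMP} shows that numerical $2$-divisibility of $c_1$ forces \emph{every} step of the MMP to be a blow-down to a smooth point (Mori's classification of extremal contractions rules out the blow-up of a curve and the $(\CP^2,\mathcal O(2))$ and $(Q,\mathcal O(1))$ cases because each would produce a curve with $K_X.C=-1$, and there are no flips in the smooth category). Consequently the minimal model $Y$ is \emph{smooth}, $K_X^3=K_Y^3-8r$ with $r\leq b_2(X)-1$ exactly, and your worry about "singularities of $X_{\min}$ of bounded type" disappears. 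You should not leave this as an obstacle: without it, a $\Q$-factorial terminal $X_{\min}$ is not itself a smooth spin threefold and relating $X$ to it by a controlled sequence of blow-ups is much harder.

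The second and more serious gap is the final step. Bounded volume gives, via Hacon--McKernan/Takayama/Tsuji, only \emph{birational} boundedness: a family such that every such threefold is birational to some fibre. Passing from this to finitely many \emph{deformation types} of the smooth models $X$ is not a "standard boundedness argument" and is where the paper does most of its work: one must resolve and run the MMP in families (Koll\'ar--Mori), use that the geometric generic fibre has only finitely many minimal models (Kawamata--Matsuki), all connected by flops (Koll\'ar), spread these out over finite covers of the base, and then use that flops of a fibre extend to flops of the family over an analytic neighbourhood so that $X$ itself appears as a fibre of one of finitely many families. A bounded family of \emph{canonical models} does not obviously yield finitely many deformation types of their smooth minimal models, since a single canonical model can have several minimal models and deformation equivalence must be tracked through the flops. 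Indeed, the statement that minimal models of general type and bounded volume form a bounded family was not available off the shelf at the time (see Remark \ref{rem:newpaper}), so this step needs to be argued, not cited.
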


The above theorem shows that the deformation type and hence the topological type of a smooth complex projective spin threefold $X$ of general type is determined up to finite ambiguity by its Betti numbers.
In particular, once we fix the Betti numbers, only finitely many fundamental groups $\pi_1(X)$, and only finitely many ring structures and torsion subgroups of $H^{\ast}(X,\Z)$ occur.
In fact, Theorem \ref{thm:deformations} implies the following.

\begin{corollary} \label{cor:deformations}
Only finitely many closed spin $6$-manifolds with bounded Betti numbers carry 
a K\"ahler structure of general type.
\end{corollary}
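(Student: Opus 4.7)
The plan is to derive the corollary as an essentially formal consequence of Theorem \ref{thm:deformations}, combined with two standard facts from complex geometry. First, I would observe that if $M$ is a closed spin $6$-manifold carrying a K\"ahler structure of general type, say via a K\"ahler threefold $X$ with $K_X$ big, then $X$ is Moishezon; by Moishezon's theorem, a K\"ahler Moishezon manifold is projective, so $X$ is in fact projective algebraic. Hence every closed spin $6$-manifold that admits a K\"ahler structure of general type arises as the underlying smooth manifold of a smooth complex projective spin threefold of general type, i.e.\ exactly the class of varieties covered by Theorem \ref{thm:deformations}.

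Since the Betti numbers of $M$ and $X$ coincide, fixing a bound on $b_i(M)$ restricts us to threefolds with correspondingly bounded Betti numbers, and Theorem \ref{thm:deformations} then provides only finitely many deformation types of such threefolds. The last step is to pass from deformation types to diffeomorphism (and hence homeomorphism) types of the underlying $6$-manifold. Here I would invoke Ehresmann's fibration theorem: any smooth proper submersion over a connected base is a locally trivial smooth fiber bundle, so the fibers of a smooth proper family of complex manifolds are pairwise diffeomorphic. Since deformation equivalence is by definition generated by such families, each of the finitely many deformation types contributes a single diffeomorphism type of closed spin $6$-manifold, and the corollary follows.

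The genuine difficulty, and the only real obstacle, is already absorbed into Theorem \ref{thm:deformations}; this corollary is essentially a translation of the algebraic finiteness statement to the topological category. The only extra ingredient is Moishezon's theorem, used once to identify ``K\"ahler of general type'' with ``projective of general type'' for threefolds, which is entirely routine.
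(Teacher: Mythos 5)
Your argument is correct and is exactly the (implicit) derivation the paper intends: the paper already notes that a K\"ahler structure of general type is projective, so Theorem \ref{thm:deformations} applies, and deformation equivalence of smooth projective threefolds forces diffeomorphism of the underlying $6$-manifolds via Ehresmann. No gaps; this matches the paper's approach.
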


Corollary \ref{cor:deformations} has the surprising consequence that almost no closed spin $6$-manifold is a complex projective variety of general type.
To see this, we start with the case of simply connected spin $6$-manifolds $M$, which are completely classified by the work of Wall, see Section \ref{subsubsec:classification}.
It turns out that for given Betti numbers with $b_2>0$, the ring structure on $H^*(M,\Z)$ may vary among infinitely many different isomorphism types and almost none of these manifolds can be a variety of general type. 
In fact, even the corresponding homotopy types cannot be realized by a variety of general type; non-simply connected examples can be produced similarly by taking connected sums with simply connected ones.
Going further, it is also possible to produce many new examples of manifolds that are homotopy equivalent but not homeomorphic to a variety of general type, see Corollary \ref{cor:6mfd-not-gentyp} below.

Although it is well--known that Theorem \ref{thm:deformations} holds in dimension two, this cannot be exploited in a similar way.
The problem being that the oriented homeomorphism type of a simply connected smooth $4$-manifold $M$ is determined by the cup product pairing on $H^2(M,\Z)$ by Freedman's work, and only very few (in particular finitely many) isomorphism types occur by Donaldson's theorem. 

Our arguments are new and do not seem to apply to other natural classes of K\"ahler manifolds.
Indeed, it is easy to see that Theorem \ref{thm:deformations} and Corollary \ref{cor:deformations} fail in higher dimensions.
Moreover, both statements are sharp in the sense that they fail if one drops either the spin assumption, or the general type assumption, see Example \ref{ex:deformations:gentype} and Proposition \ref{prop:dolgachev} below. 
Nonetheless, Theorem \ref{thm:b_3=0} and \ref{thm:deformations} lead to the expectation that in some sense, most spin $6$-manifolds should not carry any K\"ahler structure at all. 

Finiteness results like Theorem \ref{thm:deformations} seem to be quite rare; in fact, apart from the aforementioned case of surfaces, we are aware of only one more result in this direction. 
That result is due to Koll\'ar, who showed \cite[Theorem 4.2.3]{kollar} that all K\"ahler manifolds with $b_2=1$, given cohomology ring and Pontryagin classes form a bounded family.
Minimal ruled surfaces show that this statement fails already for $b_2=2$.
Moreover, for applications like Theorem \ref{thm:b_3=0} or Corollary \ref{cor:deformations}, it is crucial not to fix the Pontryagin classes.

\subsection{K\"ahler structures on spin $6$-manifolds have bounded Chern numbers} 
Let us now assume that a given spin $6$-manifold $M$ carries a K\"ahler structure and we ask how many there are.
More precisely, it is natural to ask which (homotopy classes of) almost complex structures can be realized by a K\"ahler structure.
Since $M$ is spin, the first Chern class induces a bijection between the homotopy classes of almost complex structures on $M$ and the subgroup $H^2(M,2\Z)$ of classes in $H^2(M,\Z)$ whose mod $2$ reduction vanishes, see Proposition \ref{prop:almostcx}. 
It follows from Theorem \ref{thm:deformations} that only finitely many such classes come from K\"ahler structures of general type.
This fails without the general type assumption: there are spin $6$-manifolds that admit infinitely many K\"ahler structures with unbounded first Chern class, see Proposition \ref{prop:dolgachev}. 
Nonetheless, our next result shows that the Chern numbers are always bounded. 

\begin{theorem} \label{thm:w2=0}
Let $X$ be a smooth compact K\"ahler threefold with spin structure. 
Then the Chern numbers of $X$ are determined up to finite ambiguity by the isomorphism type of the cohomology ring $H^{\ast}(X,\Z)$ and the first Pontryagin class $p_1(X)$.  
\end{theorem}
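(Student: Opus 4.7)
The plan is to bound each of the three Chern numbers $c_3$, $c_1 c_2$, and $c_1^3$ separately. The first is immediate: $c_3(X) = \chi_{\mathrm{top}}(X)$ is the Euler characteristic, so it is determined by the Betti numbers and hence by the cohomology ring.

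For $c_1 c_2$ I would use Hirzebruch--Riemann--Roch to rewrite $c_1 c_2 = 24\, \chi(\mathcal{O}_X)$, and then invoke the K\"ahler--Hodge decomposition: on a K\"ahler threefold $\chi(\mathcal O_X) = 1 - h^{0,1} + h^{0,2} - h^{0,3}$ with $h^{0,1} = b_1/2$, $0 \le h^{0,2} \le b_2/2$, and $0 \le h^{0,3} \le b_3/2$. Hence $\chi(\mathcal O_X)$, and thus $c_1 c_2$, takes only finitely many integer values once the Betti numbers are fixed.

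It remains to bound $c_1(X)^3$. From the identity $p_1 = c_1^2 - 2 c_2$ one gets $c_1^3 = 2\, c_1 c_2 + c_1 \cdot p_1$, so since $c_1 c_2$ is already bounded, bounding $c_1^3$ is equivalent to bounding the integer $c_1(X) \cdot p_1(X)$. If $X$ is of general type, this is immediate from Theorem \ref{thm:deformations}: the ring determines the Betti numbers, so $X$ lies in only finitely many deformation types, and Chern numbers are deformation invariants. Otherwise I would apply the MMP for K\"ahler threefolds of H\"oring--Peternell \cite{HP15,HP16}. Either $X$ is uniruled and admits a Mori fiber space structure, or $X$ has a minimal model $X_{\min}$ with nef $K_{X_{\min}}$; in the latter case, since $X$ is not of general type, $K_{X_{\min}}^3 = 0$. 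Under the spin hypothesis the admissible singularities of $X_{\min}$ and the admissible extremal contractions (divisorial contractions, flips, and fiber space structures) are sharply restricted; a careful book-keeping of how $c_1 \cdot p_1$ and $c_1^3$ transform under each MMP step, combined with standard boundedness results in the Fano, del Pezzo fibration, and conic bundle cases, should then yield the desired finiteness.

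The main obstacle is precisely this MMP book-keeping in the non-general-type case: one must bound, uniformly over all K\"ahler structures on the underlying spin $6$-manifold, the integer $c_1(X) \cdot p_1(X)$ in terms only of the fixed cohomology ring and $p_1$. The spin hypothesis will be essential, both in controlling the singularities of the minimal models and in restricting which extremal contractions can occur.
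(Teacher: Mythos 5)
Your treatment of $c_3$ and $c_1c_2$ is exactly the paper's, and your overall strategy for $c_1^3$ (run the K\"ahler MMP of H\"oring--Peternell, use the spin condition to restrict the steps, and treat minimal models and Mori fibre spaces separately) is also the paper's. But the two places you leave as ``book-keeping'' and ``standard boundedness results'' are precisely where the real content lies, and as stated your plan would not close. First, the key structural input is that numerical $2$-divisibility of $c_1$ rules out flips and every divisorial contraction except the blow-down of a $(\CP^2,\mathcal O(1))$ to a smooth point (Theorem \ref{thm:MMP}): Mori's classification gives a contracted rational curve with $K_X.C=-1$ in all other divisorial cases. This keeps the whole MMP in the smooth category, makes each step change $c_1^3$ by exactly $-8$, and bounds the number of steps by $b_2(X)-1$. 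Without this, ``how $c_1^3$ transforms under each MMP step'' is genuinely uncontrollable: blowing up a curve changes $K^3$ by $2\deg N_{C/X}+6$, which is unbounded even with fixed Betti numbers (Example \ref{ex:deformations:gentype}). You also need to propagate the data $(H^2_{tf},F,p_1)$ down the tower of point blow-downs to the end product $Y$ (Lemma \ref{lem:blowup-point}), since the fibration arguments must be applied to $Y$, not $X$.

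Second, and more seriously, the Mori fibre space cases do \emph{not} follow from boundedness of families: quadric bundles over curves and unramified conic bundles over surfaces with bounded Betti numbers are \emph{unbounded} families with unbounded $c_1^3$ in general (cf.\ Remark \ref{rem:b2=2:unbounded}, where $\CP(\mathcal O_{\CP^2}(2n+1)\oplus\mathcal O_{\CP^2})$ has $b_2=2$, $b_3=0$ and unbounded $c_1^3$). What bounds $c_1^3$ is the fixed cohomology ring and $p_1$, used through specific arguments: for $Y\to C$ a quadric bundle, a Riemann--Roch computation in the style of Koll\'ar expressing $\chi(\mathcal O_Y)$ as a linear polynomial in the unknown coefficient of $c_1(Y)$ with leading coefficient $\tfrac{1}{2\lambda}\chi(F,\mathcal O_F)\neq 0$ (Proposition \ref{prop:MFS:curves}); for $Y\to S$ an unramified conic bundle, the fact that $f^*H^2(S)$ is a hyperplane inside the vanishing locus of the cubic form, combined with $f_*K_Y^2\equiv -4K_S$, which gives $K_Y^3=y^3-6K_S^2$ with $K_S^2$ bounded by the Betti numbers (Proposition \ref{prop:MFS:surfaces}). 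Your reduction to bounding $c_1\cdot p_1$ is harmless but does not help, since $c_1$ is the unknown. Your use of Theorem \ref{thm:deformations} in the general type case is legitimate (the paper instead uses the Miyaoka--Yau inequality directly on the minimal model), but the non-general-type minimal case and the fibration cases remain open in your write-up.
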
 
 
The following corollary of the above theorem solves a problem of Kotschick in the case of spin structures \cite[Problem 1]{kotschick-JTOP}.
 
\begin{corollary} \label{cor:Spin}
The Chern numbers take on only finitely many values on the K\"ahler structures with the same underlying closed spin $6$-manifold.  
\end{corollary}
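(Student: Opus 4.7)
The plan is to derive Corollary \ref{cor:Spin} directly from Theorem \ref{thm:w2=0} by observing that on a fixed smooth $6$-manifold the two invariants feeding into that theorem are already determined. Let $M$ be a closed spin $6$-manifold and consider an arbitrary K\"ahler structure $X$ on $M$. Since a complex structure induces an orientation and $M$ admits only two orientations, it suffices to bound the Chern numbers of those $X$ that induce one fixed orientation; the full count then follows after doubling (or, if a spin structure on $M$ is fixed as part of the data, the orientation is already fixed).

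With the orientation on $M$ fixed, both inputs of Theorem \ref{thm:w2=0} become independent of the choice of K\"ahler structure. The graded ring $H^{\ast}(X,\Z)$ coincides with $H^{\ast}(M,\Z)$ as an honest ring, so in particular its isomorphism type is fixed. The first Pontryagin class $p_1(X)=p_1(T_{\R}X)$ is a characteristic class of the underlying oriented real tangent bundle $T_{\R}X=TM$, hence equals the smooth invariant $p_1(M)\in H^4(M,\Z)$; it does not depend on the complex structure. Consequently every K\"ahler manifold $X$ whose underlying smooth oriented $6$-manifold is $M$ presents Theorem \ref{thm:w2=0} with the same pair $(H^{\ast}(M,\Z),p_1(M))$, and the theorem immediately produces only finitely many possible Chern number triples $(c_1^3[X],\, c_1(X)c_2(X)[X],\, c_3(X)[X])$.

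This is essentially the whole argument, and there is no serious obstacle: one only has to note that the ``first Pontryagin class'' appearing in the hypothesis of Theorem \ref{thm:w2=0} is genuinely a diffeomorphism invariant, since $p_1$ is determined by the underlying real bundle alone. All of the substantive work sits in Theorem \ref{thm:w2=0} itself, where the K\"ahler minimal model program and the spin hypothesis are used to bound $c_1^3$ and $c_1 c_2$ in terms of the topological data; the passage to Corollary \ref{cor:Spin} is then formal.
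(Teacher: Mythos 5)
Your proposal is correct and is exactly the intended deduction: the paper treats Corollary \ref{cor:Spin} as an immediate consequence of Theorem \ref{thm:w2=0}, since for a fixed smooth spin $6$-manifold $M$ the cohomology ring and the first Pontryagin class are invariants of $M$ alone. Your extra remark about the two possible orientations is a harmless (and slightly more careful) refinement of the same argument.
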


While the boundedness of $c_3$ and $c_1c_2$ on K\"ahler structures with the same underlying smooth $6$-manifold is well-known, the significance of the above results concerns the boundedness of $c_1^3$.
For non-uniruled K\"ahler threefolds, our bounds for $c_1^3$ are effective and depend only on the Betti numbers, see Corollary \ref{cor:ineq}.

The original motivation for Kotschick's question goes back to a problem of Hirzebruch, asking which linear combinations of Chern and Hodge numbers of a smooth complex projective variety are determined by the underlying smooth manifold \cite{kotschick-JTOP,kotschick-PNAS,kotschick-advances,kotschick-schreieder}.

Interestingly, Theorem \ref{thm:w2=0} and Corollary \ref{cor:Spin} fail if one drops the K\"ahler assumption  \cite{lebrun}. %
Moreover, none of the statements generalizes to higher dimensions. 
Indeed, for any $n\geq 4$ there are smooth $2n$-manifolds with infinitely many K\"ahler structures whose Chern numbers are unbounded, see \cite{schreieder-tasin}. 
The method of that paper can be adapted to produce similar examples with spin structures; more precisely, \cite[Theorem 1]{schreieder-tasin} remains true for spin manifolds.

\begin{remark} \label{rem:diffeotypes}
Any topological $6$-manifold admits at most one equivalence class of smooth structures; the existence of such a structure is detected by the Kirby--Siebenmann invariant. 
In particular, Corollary \ref{cor:Spin} remains true if one replaces smooth spin $6$-manifolds by topological ones.
\end{remark}

\subsection{Conventions}  
All manifolds are smooth, closed and connected.
A K\"ahler manifold is a complex manifold which admits a K\"ahler metric. 
All schemes are separated; a variety is an integral scheme of finite type over $\C$.
A family of varieties is a proper flat morphism $\pi:\mathcal X\longrightarrow B$ between finite type schemes over $\C$ whose fibres over closed points are varieties; in this paper, the base $B$ will always be assumed to be reduced. 
For any (K\"ahler) manifold $X$, we denote by $H^\ast_{tf}(X,\Z)$ the quotient $H^\ast(X,\Z)/H^\ast(X,\Z)_{tors}$, where $ H^\ast(X,\Z)_{tors}$ is the torsion subgroup of $H^\ast(X,\Z)$. 

\section{Preliminaries} \label{sec:preliminaries}

\subsection{Spin $6$-manifolds}
A smooth orientable $6$-manifold $M$ is spin if and only if the second Stiefel--Whitney class $w_2(M)\in H^2(M,\Z\slash 2\Z)$ vanishes.
It follows from the Wu formula that this condition depends only on the homotopy type of $M$.
Moreover, if $X$ is a complex structure on $M$, then the mod $2$ reduction of $c_1(X)$ coincides with $w_2(M)$ and so it vanishes if and only if $M$ is spin.

For example, by the adjunction formula, a smooth divisor $X$ on a compact complex fourfold $Y$ is spin if it is linearly equivalent to $K_Y+2L$ for some $L\in \Pic(Y)$. 
In particular, any smooth complex projective fourfold contains (many) smooth hypersurfaces $X$ with spin structure.
The smooth $6$-manifold which underlies $X$, or the blow-up of $X$ in a finite number of points, is spin and so the results of this paper apply.

The following definition generalises the notion of spin threefold; note however that this generalization is not a topological notion anymore.  

\begin{definition}\label{def:divisible}
Let $X$ be a smooth complex projective threefold. 
We say that $c_1(X)$ is \emph{numerically divisible by $m \in \N$} if its class in $H^2_{tf}(X,\Z)$ is divisible by $m$.
Moreover, $c_1(X)$ is \emph{numerically divisible} if it is divisible by some natural number $m\geq 2$.
\end{definition}

\subsubsection{Almost complex structures.} \label{subsubsec:almostcxstr}
The obstruction for a closed oriented $6$-manifold $M$ to carry an almost complex structure is given by the image of the second Stiefel--Whitney class via the Bockstein homomorphism $H^2(M,\Z\slash 2\Z)\longrightarrow H^3(M,\Z)$ and hence it vanishes if $M$ is spin. 
In fact, we have the following,
see proof of \cite[Proposition 8]{OV95}. 

\begin{proposition} \label{prop:almostcx}
Let $M$ be a closed spin $6$-manifold. 
Then the first Chern class induces a bijection between the homotopy classes of almost complex structures on $M$ and the subgroup $H^2(M,2\Z)\subset H^2(M,\Z)$ of classes whose mod $2$ reduction vanishes.  
\end{proposition}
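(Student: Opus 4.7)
The plan is to apply obstruction theory to sections of a suitable fiber bundle. Fix a Riemannian metric on $M$; then homotopy classes of almost complex structures compatible with the orientation of $M$ are in natural bijection with homotopy classes of sections of the fiber bundle $E \to M$ whose fiber is $F = SO(6)/U(3)$, associated to the oriented orthonormal frame bundle of $TM$. The fiber is diffeomorphic to $\CP^3$, and the Hopf fibration $S^1 \to S^7 \to \CP^3$ gives $\pi_1(F) = 0$, $\pi_2(F) = \Z$, and $\pi_k(F) = \pi_k(S^7) = 0$ for $3 \leq k \leq 6$.

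For existence, the only potentially nonzero obstruction to a section lies in $H^3(M, \pi_2(F)) = H^3(M, \Z)$ and equals the integral third Stiefel--Whitney class $W_3(M) = \beta(w_2(M))$, which vanishes since $M$ is spin; the higher obstructions vanish by the computation of $\pi_k(F)$ above. For classification up to homotopy, once a reference section $s_0$ is fixed, any other section $s$ determines a primary difference $d(s_0, s) \in H^2(M, \pi_2(F)) = H^2(M, \Z)$, and all higher differences again vanish. Hence $H^2(M, \Z)$ acts freely and transitively on the set of homotopy classes of sections.

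It remains to compare this action with the variation of $c_1$. The key calculation is the long exact sequence of homotopy groups of the fibration $F \hookrightarrow BU(3) \to BSO(6)$:
\[
\pi_2(F) \to \pi_2(BU(3)) \to \pi_2(BSO(6)) \to \pi_1(F) = 0,
\]
which reads $\Z \to \Z \to \Z/2 \to 0$. Since the second map sends $c_1$ to its mod-$2$ reduction $w_2$, the first map must be multiplication by $2$; by functoriality of the associated obstruction theory over $M$, this translates to $c_1(s) - c_1(s_0) = 2\, d(s_0, s)$. Combined with the fact that $c_1(s) \equiv w_2(M) = 0 \pmod 2$ is automatic in the spin case, this yields the asserted bijection between homotopy classes of almost complex structures and $H^2(M, 2\Z) = 2 H^2(M, \Z)$. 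The step I expect to be most delicate is precisely this final identification: transferring the factor of $2$ from the level of $\pi_2$ to that of cohomology classes on $M$ requires a careful compatibility check with the Postnikov tower of the fibration, most transparently verified by computing the first Chern class of the universal $U(3)$-bundle restricted to the fiber $\CP^3 = SO(6)/U(3)$.
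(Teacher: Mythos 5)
The paper itself gives no proof of this proposition beyond the pointer to the proof of \cite[Proposition 8]{OV95}, and your obstruction-theoretic argument is exactly the standard route taken there: sections of the associated bundle with fibre $SO(6)/U(3)\cong\CP^3$, vanishing of the obstructions and of all higher differences because $\pi_k(\CP^3)=0$ for $3\leq k\leq 6$, identification of the primary obstruction with $W_3(M)=\beta(w_2(M))$, and comparison of the primary difference with $c_1$ via the fibration $\CP^3\longrightarrow BU(3)\longrightarrow BSO(6)$. Everything up to and including the identity $c_1(s)-c_1(s_0)=\pm 2\,d(s_0,s)$ is correct, and your instinct about where the delicate point lies is right: the factor of $2$ is exactly the statement that the tautological rank-three complex bundle on $SO(6)/U(3)\cong\CP^3$ has first Chern class equal to twice a generator of $H^2(\CP^3,\Z)$.

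The gap is in your last sentence. The identity $c_1(s)-c_1(s_0)=2\,d(s_0,s)$, together with $c_1(s_0)\in\ker\bigl(H^2(M,\Z)\to H^2(M,\Z/2\Z)\bigr)=2H^2(M,\Z)$, does give surjectivity of $c_1$ onto $H^2(M,2\Z)$; but injectivity requires multiplication by $2$ to be injective on $H^2(M,\Z)$, i.e.\ that $H^2(M,\Z)$ has no $2$-torsion. Your own argument shows that the homotopy classes of (orientation-compatible) almost complex structures form a torsor under all of $H^2(M,\Z)$, so the fibres of $c_1$ are torsors under the $2$-torsion subgroup $H^2(M,\Z)[2]$ and are in general not singletons. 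Concretely, $M=\R\PP^3\times S^3$ is parallelizable, hence spin, with $H^2(M,\Z)\cong\Z/2\Z$; it carries exactly two homotopy classes of almost complex structures, both with $c_1=0$, while $H^2(M,2\Z)=0$. This is really a defect of the statement once one leaves the setting of \cite{OV95} (simply connected with torsion-free cohomology) rather than of your method: what is true in general, and what the paper actually uses (finiteness of the fibres of $c_1$ in Proposition \ref{prop:dolgachev}, where moreover $M$ is simply connected with torsion-free $H^2$ so the bijection does hold), is precisely what your argument establishes. To be correct as written, your proof should either add the hypothesis that $H^2(M,\Z)$ has no $2$-torsion or weaken the conclusion to ``surjective with fibres torsors under $H^2(M,\Z)[2]$''.
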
 

\subsubsection{Classification of simply connected spin $6$-manifolds} \label{subsubsec:classification}
Wall \cite{wall} showed that simply connected closed spin $6$-manifolds $M$ with torsion free cohomology are classified by the following tuple  of algebraic invariants
$$
(b_3(M),H^2(M,\Z),F_M,p_1(M)) ,
$$
where $F_M$ denotes the cubic form on $H^2(M,\Z)$, given by cup product, and $p_1(M)$ is the linear form on $H^2(M,\Z)$, given by cup product with the first Pontryagin class, see also \cite[Section 1]{OV95}. 
Two such manifolds $M$ and $M'$ are orientation-preservingly homeomorphic (or diffeomorphic, see Remark \ref{rem:diffeotypes} above) if and only if there is an isomorphism between $H^2(M,\Z)$ and $H^2(M',\Z)$ which respects the above tuple of algebraic invariants.
A given tuple 
$
(b_3,H,F,p_1)
$
can be realized by a simply-connected smooth spin $6$-manifold with torsion-free cohomology if and only if $b_3$ is even and
\begin{align} \label{eq:admissible}
4W^3\equiv p_1(M)W \mod 24
\end{align}
for all $W\in H$; such tuples are called admissible.

\subsection{Chern numbers of smooth threefolds}
Let $X$ be a smooth compact complex threefold.  
Since $c_3(X)=\sum_i(-1)^ib_i(X)$ coincides with the topological Euler number, it is a topological invariant of $X$.
Moreover, by Riemann--Roch, $c_1c_2(X)=24\chi(X,\mathcal O_X)$ is bounded by the Hodge numbers, hence by the Betti numbers if $X$ is K\"ahler.  
In particular, in order to prove Theorem \ref{thm:w2=0}, we only need to prove the boundedness of $c_1^3$. 
This is known if $K_X$ is nef. 

\begin{proposition} \label{prop:gentype}
Let $X$ be a smooth K\"ahler threefold such that $K_X$ is nef.
Then $c_1^3(X)$ is bounded by the Betti numbers of $X$.
\end{proposition}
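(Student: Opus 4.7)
The strategy is to bound $|c_1^3(X)| = |K_X^3|$ in terms of the Betti numbers, using that $c_1(X) = -K_X$. Since $K_X$ is nef on a K\"ahler threefold, $K_X^3 \geq 0$, with equality unless $K_X$ is big; so I only need an upper bound on $K_X^3$, and I may assume $K_X$ is big, i.e., $X$ is of general type.

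In this case $K_X$ is a big line bundle, so $X$ is Moishezon. Being simultaneously Moishezon and K\"ahler, $X$ is projective by Moishezon's theorem, and hence $X$ is a smooth minimal projective threefold of general type. Miyaoka's inequality in this setting reads
\[ K_X \cdot \bigl(3 c_2(X) - K_X^2\bigr) \geq 0, \]
which gives $K_X^3 \leq 3\, K_X \cdot c_2(X)$. By Hirzebruch--Riemann--Roch, $c_1(X) \cdot c_2(X) = 24\, \chi(X, \mathcal{O}_X)$, so $K_X \cdot c_2(X) = -24\, \chi(X, \mathcal{O}_X)$, and we conclude
\[ K_X^3 \leq -72\, \chi(X, \mathcal{O}_X) \leq 72\, |\chi(X, \mathcal{O}_X)|. \]

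To finish, I bound $|\chi(X, \mathcal{O}_X)|$ by the Betti numbers: since $X$ is K\"ahler, the Hodge decomposition yields $h^{0,i}(X) \leq b_i(X)$, so
\[ |\chi(X, \mathcal{O}_X)| = \Bigl| \sum_{i=0}^3 (-1)^i h^{0,i}(X) \Bigr| \leq \sum_{i=0}^3 b_i(X). \]
This completes the bound. The essential nontrivial input is Miyaoka's inequality for threefolds of general type; the reduction from the K\"ahler to the projective setting via Moishezon's theorem is the crucial preparatory step, since Miyaoka's result is usually stated for projective threefolds. Everything else is an assembly of standard facts (nefness implies $K_X^3 \geq 0$, Hirzebruch--Riemann--Roch, and the Hodge decomposition on K\"ahler manifolds).
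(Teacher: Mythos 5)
Your proof is correct and follows essentially the same route as the paper: reduce to the projective case via Moishezon's theorem, apply a Miyaoka-type Chern class inequality for minimal threefolds of general type, and bound $c_1c_2(X)=24\chi(X,\mathcal{O}_X)$ by the Betti numbers via Hodge theory. The only (harmless) differences are that you invoke the weaker Miyaoka inequality $(3c_2-c_1^2)\cdot K_X\geq 0$ where the paper uses the Miyaoka--Yau inequality $c_1^3\geq \tfrac{8}{3}c_1c_2$ of Tsuji and Zhang, and that you dispose of the non-general-type case by noting that a nef, non-big $K_X$ has $K_X^3=0$, where the paper instead cites $c_1^3=0$ for Kodaira dimension $0,1,2$.
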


\begin{proof} 
If $X$ is of general type, then it is projective because it is Moishezon and K\"ahler.
Following an observation of Kotschick \cite{kotschick-JTOP}, the boundedness of $c_1^3(X)$ follows then from the Miyaoka--Yau inequality 
\begin{align} \label{eq:Yauineq}
0>c_1^3(X)\geq \frac{8}{3}c_1c_2(X), 
\end{align}
see \cite{tsuji,zhang}.
If $X$ has Kodaira dimension $0,1$ or $2$, then $c_1^3(X)=0$, cf.\ \cite{CHP16}.   
\end{proof}

\section{The bimeromorphic geometry of K\"ahler threefolds with spin structure} \label{sec:MMP}

One of the key ideas of this paper is the observation that for a K\"ahler threefold, the purely topological condition of being spin puts strong restrictions on its bimeromorphic geometry.
This is a consequence of the minimal model program for K\"ahler threefolds \cite{HP15,HP16}, together with some classical results of Mori \cite{mori-annals}. 

Before we state the result, let us recall that a K\"ahler manifold $Y$ is a Mori fibre space, if there is a proper morphism $f:Y\longrightarrow B$ with positive dimensional connected fibres onto a normal $\Q$-factorial K\"ahler space $B$ with at most klt singularities, such that $-K_Y$ is $f$-ample and the relative Picard number is one, see \cite{HP15}.
In this paper, we will only need to deal with the special case where $B$ is smooth. 

\begin{theorem}[\cite{HP15,HP16,mori-annals}]  \label{thm:MMP} 
Let $X$ be a smooth K\"ahler threefold such that $c_1(X)$ is numerically divisible by $2$ (e.g.\ a spin threefold). 
Then there is a finite sequence of blow-downs to smooth points
$$
X=Y_r\longrightarrow Y_{r-1}\longrightarrow \dots \longrightarrow Y_1\longrightarrow Y_0=Y,
$$ 
such that $Y$ is a smooth K\"ahler threefold with $c_1(Y)$ numerically divisible by $2$, which is either a minimal model (i.e.\ $K_Y$ is nef) or a Mori fibre space.  
Moreover, if $Y$ is a Mori fibre space, then it is one of the following: 
\begin{enumerate}  
\item an unramified conic bundle over a smooth K\"ahler surface; 
\item a quadric bundle over a smooth curve; 
\item a smooth Fano threefold. 
\end{enumerate}
\end{theorem}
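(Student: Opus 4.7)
The plan is to run the minimal model program for K\"ahler threefolds (H\"oring--Peternell \cite{HP15,HP16}) starting from $X$, and to show inductively that at every step we remain in the smooth K\"ahler category with $c_1$ numerically divisible by $2$, and that every divisorial contraction is a blow-down of a smooth point.

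Concretely, I assume inductively that $Y_i$ is a smooth K\"ahler threefold with $c_1(Y_i)$ numerically divisible by $2$. If $K_{Y_i}$ is nef the process stops. Otherwise, by the K\"ahler cone and contraction theorems of \cite{HP15,HP16}, there is a $K_{Y_i}$-negative extremal ray $R$, whose contraction is either divisorial or a Mori fibre space. In the divisorial case, Mori's classification, applied to the smooth threefold $Y_i$, places the contraction in one of the five types $E_1,\dots,E_5$, and a standard normal-bundle computation yields $-K_{Y_i}\cdot \ell = 2$ in type $E_2$ (lines in the exceptional $\PP^2$ with normal bundle $\mathcal O(-1)$), while $-K_{Y_i}\cdot \ell = 1$ in all other cases (fibres of the ruled exceptional divisor in $E_1,E_3,E_4$, and lines in the exceptional $\PP^2$ with normal bundle $\mathcal O(-2)$ in $E_5$). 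Divisibility of $c_1(Y_i)$ by $2$ thus forces the contraction to be of type $E_2$, i.e.\ a blow-down of a smooth point, so that $Y_{i-1}$ is smooth. Using the relation $K_{Y_i} = \pi^* K_{Y_{i-1}} + 2E$ together with the direct sum decomposition $H^2_{tf}(Y_i,\Z) = \pi^* H^2_{tf}(Y_{i-1},\Z) \oplus \Z\cdot[E]$, one checks at once that $c_1(Y_i)\in 2H^2_{tf}(Y_i,\Z)$ implies $c_1(Y_{i-1})\in 2H^2_{tf}(Y_{i-1},\Z)$. Since the Picard number strictly decreases with each step, the procedure terminates, and in particular no flips are ever encountered.

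If the process ends at a Mori fibre space $f:Y\longrightarrow B$, I classify by $\dim B$. For $\dim B=0$, $Y$ is a smooth Fano threefold, giving case (3). For $\dim B=1$, the general fibre $F$ is a del Pezzo surface whose normal bundle in $Y$ is trivial, so $-K_F = c_1(Y)|_F$ is divisible by $2$; inspecting the list of del Pezzo surfaces, this forces $F\cong \PP^1\times\PP^1$, so $f$ is a quadric surface bundle, giving case (2). For $\dim B=2$, the general fibre is a smooth $\PP^1$ and $f$ is a conic bundle; by Mori's description of conic bundles \cite{mori-annals}, any non-smooth fibre contains a rational curve $\ell$ with $-K_Y\cdot\ell=1$, contradicting divisibility, so every fibre is a smooth conic. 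This makes $f$ an unramified conic bundle and also forces the base $B$ to be smooth, giving case (1).

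The main obstacle is the inductive step: one must invoke the K\"ahler MMP of \cite{HP15,HP16} to obtain the cone and contraction theorems, and simultaneously rely on Mori's list of extremal divisorial contractions, originally formulated for smooth projective threefolds. Once these two external ingredients are in place, the numerical argument is essentially combinatorial: the parity of $-K\cdot\ell$ on a generating curve of the extremal ray selects exactly the good contraction type, and the only genuinely new check is that divisibility of $c_1$ is preserved under smooth point blow-downs, which follows from the orthogonal decomposition of $H^2_{tf}$.
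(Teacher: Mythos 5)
Your argument follows the paper's proof essentially step for step: run the K\"ahler MMP of H\"oring--Peternell, use Mori's classification of extremal divisorial contractions on smooth threefolds together with the parity of $-K\cdot\ell$ to force every divisorial contraction to be of type $E_2$ (blow-down of a smooth point), check that divisibility by $2$ is preserved via $K_X=f^{\ast}K_Y+2E$, and classify the terminal Mori fibre space by the dimension of the base, using divisibility again to rule out all del Pezzo fibres except $\PP^1\times\PP^1$ and to exclude singular conics. The one imprecision is the surface-base case: there $Y$ need not be projective (unlike $\dim B\leq 1$, where $h^{2,0}(Y)=0$ forces projectivity by Kodaira's criterion), so the conic-bundle structure and the smoothness of $B$ should be taken from the analytic result \cite[Corollary 2.4.2]{mori-prokhorov} rather than from Mori's projective theorem, which is exactly how the paper proceeds.
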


\begin{proof} 
By \cite{HP15,HP16}, we can run a minimal model program on $X$.
If $X$ is not a minimal model, nor a Mori fibre space, then, since there are no flips in the smooth category \cite{mori-annals, mori-flips}, there is a divisorial contraction $f:X\longrightarrow Y$ with exceptional divisor $E$.
Since $c_1(X)$ is numerically divisible by 2, we get that $K_X.C\leq -2$ for every contracted curve $C\subset E$.
It is known to experts, that the theorem follows now from \cite{mori-annals} and \cite{mori-prokhorov}; we sketch the proof for convenience of the reader.

Since $-K_X$ is $f$-ample, Mori's classification of all possible exceptional divisors applies \cite[Theorem 3.3]{mori-annals}.
In particular, $f$ contracts $E$ either to a smooth curve, or to a point.
In the former case, $f$ is the blow-up of a smooth curve of $Y$ and so the general fibre of $E\longrightarrow f(E)$ is a rational curve $C\subset E$ with $K_X.C=-1$, which contradicts our assumptions. 
In the latter case, $E$ together with its normal bundle in $X$ is isomorphic to $(\CP^2,\mathcal O(1))$, $(\CP^2,\mathcal O(2))$ or $(Q,\mathcal O(1))$, where $Q\subset \CP^3$ is an integral quadric.
If $(E,N_{E\slash X})=(Q,\mathcal O(1))$, then $K_X=f^{\ast}K_Y+E$ which implies $K_X.\ell=-1$ for a line $\ell\subset Q$.
If $(E,N_{E\slash X})=(\CP^2,\mathcal O(2))$, then $K_X=f^{\ast}K_Y+\frac{1}{2}E$ which implies again $K_X.\ell=-1$ for a line $\ell\subset \CP^2$.
Since $K_X$ is numerically divisible by 2, none of the two possibilities occur, which shows $(E,N_{E\slash X})=(\CP^2,\mathcal O(1))$, and so $f$ is the blow-down to a smooth point of $Y$. 
In particular, $K_X=f^{\ast}K_Y+2E$ and so $Y$ is a smooth K\"ahler threefold  such that $c_1(Y)$ is numerically divisible by 2 and $b_2(Y)=b_2(X)-1$. 
After a finite number of such blow-downs, we may assume that $Y$ is either a minimal model, or a Mori fibre space.

It remains to deal with the case where $f:Y\longrightarrow B$ is a Mori fibre space.
If $\dim(B)\leq 1$, then $h^{2,0}(Y)=0$ and so $Y$ is projective by Kodaira's criterion.  
It thus follows from \cite[Theorem 3.5]{mori-annals} that $B$ is smooth.
Moreover, if $\dim(B)=1$, then the general fibre of $f$ is a smooth del Pezzo surface with spin structure, hence a smooth quadric in $\CP^3$.
This concludes the case $\dim(B)\leq 1$.

If $\dim(B)=2$, then, by \cite[Corollary 2.4.2]{mori-prokhorov}, $B$ is smooth and there is a rank $3$ vector bundle $\mathcal E$ on $B$ together with an inclusion $Y\subset \CP(\mathcal E)$ which realises each fiber $f^{-1}(b)$ as a conic in $\CP(\mathcal E_b)$.
If $f^{-1}(b)$ is singular, then it contains a line $\ell$ with $K_{Y}.\ell=-1$, which contradicts the assumption that $c_1(Y)$ is numerically divisible.
This proves that $f$ is an unramified conic bundle over a smooth K\"ahler surface, which finishes the proof.
\end{proof}

The same argument as above, together with the classification of smooth Fano threefolds \cite[\S 12]{iskovski}, shows the following.

\begin{theorem}[\cite{HP15,HP16,mori-annals}] \label{thm:p>2}
Let $X$ be a smooth K\"ahler threefold such that $c_1(X)$ is numerically divisible by some integer $m \ge 3$.
Then one of the following 
holds:
\begin{enumerate}
\item $X$ is a minimal model, i.e.\ $K_X$ is nef;  
\item $m=3$ and $X$ is a $\CP^2$-bundle over a smooth curve;  
\item $m=3$ and $X$ is isomorphic to a smooth quadric in $\CP^4$;
\item $m=4$ and $X$ is isomorphic to $\CP^3$.
\end{enumerate} 
\end{theorem}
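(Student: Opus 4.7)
The plan is to follow the same minimal model program strategy as in Theorem \ref{thm:MMP}, exploiting that the stronger divisibility $m \geq 3$ rules out every divisorial contraction entirely, so that $X$ itself is already either a minimal model or a Mori fibre space. Concretely, I would first run an MMP on $X$ using \cite{HP15,HP16}. Suppose for contradiction that there is a divisorial contraction $f: X \to Y$ with exceptional divisor $E$. By Mori's classification \cite[Theorem 3.3]{mori-annals}, either $f$ is the blow-up of a smooth curve, in which case a general fibre $\ell$ of $E \to f(E)$ satisfies $K_X \cdot \ell = -1$, or $(E, N_{E/X})$ is one of $(\CP^2, \mathcal O(1))$, $(\CP^2, \mathcal O(2))$, $(Q, \mathcal O(1))$, giving $K_X \cdot \ell \in \{-2,-1,-1\}$ for a line $\ell$, exactly as computed in the proof of Theorem \ref{thm:MMP}. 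Since $m \geq 3$ must divide $K_X \cdot \ell$, all four possibilities are ruled out.

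Hence $X$ itself is either a minimal model (giving case (1)) or a Mori fibre space $f: X \to B$, and I would analyse the three possibilities for $\dim B$. As in Theorem \ref{thm:MMP}, $B$ is smooth in each case. If $\dim B = 2$, then $f$ is an unramified conic bundle and a smooth fibre $\ell \cong \CP^1$ satisfies $K_X \cdot \ell = -2$, again incompatible with $m \geq 3$, so this case does not occur. If $\dim B = 1$, the general fibre is a smooth del Pezzo surface of Picard rank one whose anticanonical class is numerically divisible by $m \geq 3$; the classification of del Pezzos forces this fibre to be $\CP^2$ and $m = 3$, and one must then promote the fibration to a Zariski-locally trivial $\CP^2$-bundle, yielding case (2). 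Finally, if $\dim B = 0$ then $X$ is a smooth Fano threefold with $\rho(X)=1$ and Fano index divisible by $m \geq 3$; the Iskovskikh classification of smooth Fano threefolds \cite[\S 12]{iskovski} leaves only the smooth quadric in $\CP^4$ (Fano index $3$, case (3)) and $\CP^3$ (Fano index $4$, case (4)).

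The main obstacle is the $\dim B = 1$ case: passing from ``general fibre is $\CP^2$'' to ``$f$ is a $\CP^2$-bundle.'' I would rule out non-generic fibres by observing that any singular or non-$\CP^2$ scheme-theoretic fibre $F_0$ would contain a rational curve $C$ with $0 > K_X \cdot C > -3$ (for instance, a component of a reducible fibre, or a line in a cone), contradicting the divisibility of $c_1(X)$ by $3$. Given the absence of such curves, every fibre is isomorphic to $\CP^2$, and classical results (as used in \cite{mori-prokhorov}) imply that $f$ is in fact a projective bundle $\CP(\mathcal E) \to B$ for a rank-three vector bundle $\mathcal E$ on the smooth curve $B$. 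The remaining arguments in the $\dim B = 0$ case are a direct table look-up in \cite[\S 12]{iskovski}, and thus require no new ideas.
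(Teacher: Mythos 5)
Your proposal is correct and follows exactly the route the paper intends: the paper gives no separate proof of this theorem, saying only that ``the same argument as above [Theorem \ref{thm:MMP}], together with the classification of smooth Fano threefolds,'' suffices, and your write-up is precisely that argument with the details filled in (divisorial contractions all carry a curve $C$ with $K_X\cdot C\in\{-1,-2\}$, conic bundles carry fibres with $K_X\cdot\ell=-2$, del Pezzo fibres with anticanonical class divisible by $m\ge 3$ must be $\CP^2$, and Fano threefolds of index $\ge 3$ are the quadric or $\CP^3$). The only cosmetic slip is calling the general fibre over a curve a del Pezzo ``of Picard rank one''---only the relative Picard rank is one---but this plays no role in the argument.
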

%

\section{Non-K\"ahler manifolds with K\"ahler homotopy types} \label{sec:b_3=0}

In this section we produce many non-K\"ahler manifolds with the homotopy type of K\"ahler manifolds.
The main results are Theorem \ref{thm:b_3=0:2} 
and Proposition \ref{prop:b_2=1} below.
We start with the following lemma. 

\begin{lemma} \label{lem:p1(Mfs)} 
Let $X$ be a smooth K\"ahler threefold admitting the structure of an unramified conic bundle $f:X\longrightarrow S$ over a smooth K\"ahler surface $S$.
Then, $p_1(X)\in f^{\ast}H^4(S,\Z)$.
\end{lemma}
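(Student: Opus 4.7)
The plan is to reduce the statement to a question about the relative tangent bundle $T_{X/S}$ and then exploit the fact that, topologically, $f$ is an oriented $S^2$-fibration. From the relative tangent sequence $0 \to T_{X/S} \to T_X \to f^* T_S \to 0$ and the Whitney product formula for the total Chern class, using that $T_{X/S}$ is a (complex) line bundle since the fibres are one-dimensional, one obtains
$$
p_1(X) = c_1(T_{X/S})^2 + f^* p_1(S).
$$
Hence it suffices to prove $c_1(T_{X/S})^2 \in f^* H^4(S, \Z)$.

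Since $f$ is unramified, every fibre is a smooth conic, hence topologically an $S^2$, so $f \colon X \to S$ is a smooth oriented $S^2$-fibration. By the classical homotopy equivalence $\mathrm{Diff}^+(S^2) \simeq SO(3)$, every such fibration is isomorphic to the unit sphere bundle $S(V) \to S$ of some oriented real rank-$3$ vector bundle $V \to S$. Under this identification, at each $x \in S(V) \subset V$ the fibre of $V$ splits orthogonally as $\R \cdot x \oplus x^\perp$, giving a global isomorphism
$$
f^* V \cong T_{X/S} \oplus \underline{\R}
$$
of oriented real vector bundles on $X$, where $\underline{\R}$ is the trivial line bundle spanned by the tautological radial section. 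Taking Pontryagin classes yields $p_1(T_{X/S}) = f^* p_1(V) \in f^* H^4(S, \Z)$, and combined with the first step this proves the lemma.

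The step requiring the most care is the identification of the abstract smooth $S^2$-fibration with an honest sphere bundle of a real rank-$3$ vector bundle: it rests on the topological fact $\mathrm{Diff}^+(S^2) \simeq SO(3)$ and has no direct algebraic analogue. An alternative purely algebraic route would be to embed $X \subset \PP(\mathcal E)$ as a divisor in $|\mathcal O(2) \otimes \pi^* L|$ inside a $\PP^2$-bundle via \cite[Cor.~2.4.2]{mori-prokhorov}, compute $c_1(T_{X/S})$ by adjunction in terms of the tautological class, and use that unramifiedness forces $L^{\otimes 3} \cong (\det \mathcal E)^{\otimes 2}$ (nonvanishing of the discriminant of the defining quadratic form); this relation is exactly what is needed for the non-pullback contributions to $c_1(T_{X/S})^2$ to cancel when paired against $f^* H^2(S, \Z)$.
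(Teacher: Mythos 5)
Your proof is correct and follows essentially the same route as the paper: both arguments realize the $S^2$-fibration as the sphere bundle of an oriented real rank-three vector bundle on $S$ via Smale's theorem $\mathrm{Diff}^+(S^2)\simeq SO(3)$, and then use the stable triviality of the sphere's tangent bundle (in your formulation, $f^*V\cong T_{X/S}\oplus\underline{\R}$) to conclude that $p_1(X)$ is pulled back from $S$. The only cosmetic difference is that you first isolate $c_1(T_{X/S})^2$ via the Whitney formula, whereas the paper directly observes that $TX$ is stably isomorphic to $f^*(TS\oplus E)$.
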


\begin{proof}
If $X$ is projective, then the first statement follows from the observation that for any smooth curve $C\subset S$, the preimage $R:=f^{-1}(C)$ is a ruled surface with normal bundle $f^{\ast}\mathcal O_S(C)|_R$ and so $p_1(X).R=p_1(R)+p_1(f^{\ast}\mathcal O_S(C)|_{R})=0$.
The second statement can be proven similarly.

In general, the lemma follows from the following topological argument, suggested to us by M.\ Land respectively.
By \cite{smale}, any smooth $S^2$-bundle $f:M\longrightarrow S$ can be realized as the sphere bundle of an oriented real rank three vector bundle $E$ on $S$.
Since the tangent bundle of a sphere is stably trivial, $TM$ is stably isomorphic to $f^{\ast}(TS\oplus E)$ and so $p_1(M)=f^*p_1(TS\oplus E)$. 
%
%
\end{proof}

The next result implies Theorem \ref{thm:b_3=0} stated in the introduction.

\begin{theorem} \label{thm:b_3=0:2}
Let $X$ be a simply connected spin 3-fold with $b_2(X)\geq 1$ and $H^3(X,\Z)=0$. 
Then there are infinitely many pairwise non-homeomorphic spin $6$-manifolds $M_i$ with the oriented homotopy type of $X$, and which are not homeomorphic to a K\"ahler manifold.
\end{theorem}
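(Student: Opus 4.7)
The approach is to combine Wall's classification of simply connected closed smooth spin $6$-manifolds --- which produces infinitely many spin $6$-manifolds with the same oriented homotopy type as $X$ but different first Pontryagin classes --- with Theorem \ref{thm:MMP}, which forces the Pontryagin class of a K\"ahler threefold with spin structure and bounded topology to lie in a finite set. Since $X$ is simply connected and $H^3(X,\Z)=0$, Poincar\'e duality and the universal coefficient theorem imply that all integral cohomology of $X$ is torsion-free, so Wall's classification applies directly.

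The first step is to construct the candidates. For each $\lambda\in H^4(X,\Z)\cong\Hom(H^2(X,\Z),\Z)$, the tuple $(b_3(X),\,H^2(X,\Z),\,F_X,\,p_1(X)+24\lambda)$ satisfies the admissibility condition \eqref{eq:admissible}, because changing $p_1$ by $24\lambda$ only adds multiples of $24$ to the pairing with any $W\in H^2$. Wall's classification then yields a smooth simply connected closed spin $6$-manifold $M_\lambda$ with these invariants (with $M_0$ diffeomorphic to $X$). The oriented homotopy type of a simply connected closed $6$-manifold with torsion-free cohomology and $H^3=0$ is determined by $(b_3,H^2,F)$, as can be seen by a standard Postnikov tower analysis --- the homotopy type does not detect $p_1$ --- so every $M_\lambda$ is orientedly homotopy equivalent to $X$. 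On the other hand, $M_\lambda$ and $M_{\lambda'}$ are orientation-preservingly homeomorphic iff $24(\lambda-\lambda')$ lies in the orbit of $0$ under the finite group $\Aut(H^2(X,\Z),F_X)$; since $b_2(X)\geq 1$ the lattice $H^4(X,\Z)$ has positive rank, and hence infinitely many pairwise non-homeomorphic $M_\lambda$ arise.

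The second step is to show that only finitely many $M_\lambda$ can be K\"ahler. If $M_\lambda$ is homeomorphic --- hence diffeomorphic, by Remark \ref{rem:diffeotypes} --- to a K\"ahler manifold, then Theorem \ref{thm:MMP} realises $M_\lambda$ as an iterated blow-up at $r=b_2(X)-b_2(Y)$ smooth points of a smooth K\"ahler threefold $Y$ with $c_1(Y)$ numerically divisible by $2$, where $Y$ is either a minimal model or one of the three listed Mori fibre spaces. Since the Pontryagin class transforms predictably under blow-up at smooth points, $p_1(M_\lambda)$ is determined, modulo the finite action of $\Aut(H^2(X,\Z),F_X)$, by the diffeomorphism type of $Y$ together with the finite combinatorial data identifying the exceptional classes in $H^2(M_\lambda)$. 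The proof then reduces to showing that, among threefolds $Y$ of bounded Betti numbers (since $b_i(Y)\leq b_i(X)$), only finitely many diffeomorphism types occur in each case: for minimal models one combines Proposition \ref{prop:gentype} (bounding $c_1^3(Y)$) with the boundedness of $c_3(Y)$ and $c_1c_2(Y)$ in terms of the Betti numbers; for smooth Fano threefolds one invokes the classification; for conic and quadric bundles one combines Lemma \ref{lem:p1(Mfs)} (and its analogue for quadric bundles over smooth curves) with the bounded topology of the base to pin $Y$ down to finitely many diffeomorphism types. Assembling the cases, only finitely many $p_1(M_\lambda)$ arise from K\"ahler structures, so infinitely many $M_\lambda$ must be non-K\"ahler.

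The main obstacle is the second step, particularly the boundedness of minimal models of Kodaira dimension $\leq 2$ and the systematic control of $p_1$ in the bundle cases. The crucial leverage is the spin condition itself: transported via the MMP to the numerical divisibility of $c_1(Y)$ by $2$, it rigidifies the bimeromorphic geometry, ruling out pathological bundles and placing $p_1(Y)$ in prescribed sublattices of bounded index, which is precisely what makes the required topological bounds on $Y$ tractable.
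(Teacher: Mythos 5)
Your overall skeleton (Wall--Zhubr classification to produce the candidates, then the spin MMP of Theorem \ref{thm:MMP} to constrain which of them can be K\"ahler) is the same as the paper's, but the execution of your second step has a genuine gap. You reduce everything to the claim that the possible end products $Y$ of the MMP --- minimal models with bounded Betti numbers, or Mori fibre spaces over bases of bounded topology --- fall into finitely many diffeomorphism types, so that only finitely many classes $p_1(M_\lambda)$ can be realized by K\"ahler structures. For minimal models this is not known and is not implied by bounding the Chern \emph{numbers} $c_1^3$, $c_1c_2$, $c_3$: the relevant datum is the class $p_1(Y)\in H^4(Y,\Z)$, i.e.\ a linear form on $H^2$, and finiteness of diffeomorphism types of minimal K\"ahler threefolds with bounded Betti numbers is open already for Calabi--Yau threefolds (and the analogous finiteness claim for the surface bases of the conic bundles fails, e.g.\ for Dolgachev surfaces). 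The paper's proof avoids this entirely and uses two mechanisms you do not invoke: first, since $b_1=b_3=0$, Riemann--Roch gives $c_1c_2(X_r)=24+24h^{2,0}(X_r)>0$, which together with Miyaoka--Yau excludes general type, and for Kodaira dimension $0,1,2$ one gets $p_1c_1=-2c_1c_2$ nonzero and bounded, contradicting the fact that $p_1(M_r)=r(p_1(X)+\omega)$ was \emph{chosen highly divisible} (the same divisibility also rules out the blow-downs and the Fano case); second, for the remaining Mori fibre space cases $p_1$ is forced into one of finitely many lines in $H^4$ that are determined by the cubic form alone (via Lemma \ref{lem:p1(Mfs)} and the vanishing locus of the cubic), and the perturbation $\omega$ is chosen \emph{generic} precisely so that $r(p_1(X)+\omega)$ avoids those lines. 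Without the divisibility of $p_1$ and the genericity of its direction, your argument does not close.

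Two smaller points. Your assertion that the oriented homotopy type of such a $6$-manifold is determined by $(b_3,H^2,F)$ and ``does not detect $p_1$'' is unjustified and is at odds with the way the paper uses Zhubr's theorem: the homotopy type is only known to be controlled by $p_1$ modulo $48$, so your perturbations should be taken in $48\cdot H^4(X,\Z)$ rather than $24\cdot H^4(X,\Z)$ (admissibility only needs multiples of $24$, but homotopy invariance needs multiples of $48$). Also, $\Aut(H^2(X,\Z),F_X)$ need not be finite, so to exhibit infinitely many distinct homeomorphism types you should use an automorphism-invariant of $p_1(M_\lambda)$, such as its divisibility in $H^2(X,\Z)^{\vee}$; this is exactly how the paper separates the manifolds $M_r$.
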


\begin{proof} 
Since $X$ is simply connected with $H^3(X,\Z)=0$, $X$ has torsion free cohomology, and so the classification of Wall applies, see Section \ref{subsubsec:classification}.
Let 
$$
(0,H^2(X,\Z),F_X,p_1(X))
$$ 
be the admissible tuple which corresponds to $X$. 

In order to construct examples that are homotopy equivalent to $X$, we fix a general element
$$
\omega\in H^2(X,\Z)^{\vee} \ \ \text{with}\ \ \omega \equiv 0 \mod 48 .
$$  
General means here that its image in rational cohomology lies outside a finite number of proper subvarieties that we will encounter in the process of the proof; it is important to note that these subvarieties are going to depend only on the ring structure of $H^*(X,\Q)$.

For any integer $r\equiv 1 \mod 48$, the tuple
$$
(0,H^2(X,\Z),F_X,r(p_1(X)+\omega))
$$ 
is admissible, see (\ref{eq:admissible}).
Let $M_{r}$ be the corresponding spin $6$-manifold. 
By work of Zhubr (see \cite{zhubr} or \cite[Theorem 2]{OV95}), the homotopy type of $M_{r}$ depends only on the reduction modulo $48$ of $p_1(M_r)$.
Our choices ensure therefore that $M_{r}$ is homotopy equivalent to $X$.  
Since the image of the first Pontryagin class in $H^4_{t.f.}(X,\Z)$ ($=H^4(X,\Z)$) is a homeomorphism invariant by Novikov's theorem \cite{novikov}, $M_{r}$ and $M_{r'}$ are not homeomorphic if $r\neq r'$, see also Remark \ref{rem:diffeotypes}.
In order to conclude Theorem \ref{thm:b_3=0:2}, it thus suffices to show that for infinitely many values of $r$, $M_{r}$ is not homeomorphic to a K\"ahler manifold. 

For a contradiction, we assume from now on that $r>>0$ is sufficiently large and that there is a K\"ahler manifold $X_{r}$ which is homeomorphic to $M_{r}$. 
By construction, there is a natural isomorphism
\begin{align}\label{H*X}
H^*(X_{r},\Z)\cong H^*(X,\Z).
\end{align} 
By Novikov's theorem, or Remark \ref{rem:diffeotypes}, the above isomorphism identifies $p_1(X_r)$ with $p_1(M_r)=r(p_1(X)+\omega)$. 
Since the first Pontryagin class of a blow-up of a complex threefold in a point is not divisible by any integer greater than $4$, see \cite[Proposition 13]{OV95}, it follows from Theorem \ref{thm:MMP} that $X_{r}$ is minimal or a Mori fibre space.

Since $X_{r}$ is K\"ahler with $b_1(X_{r})=b_3(X_{r})=0$, Riemann--Roch says
\begin{align}\label{eq:RR}
c_1c_2(X_{r})=24+24h^{2,0}(X_{r}) >0.
\end{align}
The Miyaoka--Yau inequality (\ref{eq:Yauineq}) shows then that $X_{r}$ is not of general type.
We claim that it must in fact be of negative Kodaira dimension.
Indeed, otherwise $c_1^3(X_{r})=0$ and so
$$
p_1(X_{r})c_1(X_{r})=c_1^3(X_{r})-2c_1c_2(X_{r})=-48-48 h^{2,0}(X_{r})
$$
would be non-zero and very divisible, whereas $h^{2,0}(X_{r})$ is bounded by $b_2(X_{r})=b_2(X)$.

We have thus shown that $X_{r}$ is a Mori fibre space.
Since there are only finitely many deformation types of Fano threefolds, and since $p_1(X_{r})$ is sufficiently divisible, $X_{r}$ cannot be Fano.

Let us assume that $f:X_{r}\longrightarrow C$ is a Mori fibre space over a curve. 
A  general fibre $Q$ of $f$ is a smooth quadric surface and so 
$p_1(X_{r}).Q=p_1(Q)=0 
$. 
On the other hand, the line $[Q]\cdot \Q\subset H^2(X_{r},\Q)$ lies in the locus where the cubic form vanishes and so it is, up to at most three possibilities, uniquely determined by (\ref{H*X}).
(This uses $b_2(X_{r})=2$, which follows from $\rho(X_r\slash C)=1$ and $h^{2,0}(X_r)=0$.)
Since $\omega$ is sufficiently general, the restriction of $p_1(X_{r})\in H^2(X_{r},\Q)^{\vee}$ to any of these lines will be nonzero and so this case cannot happen. 

It remains to deal with the case where $f:X_{r}\longrightarrow S$ is a Mori fibre space over a K\"ahler surface $S$.
By Theorem \ref{thm:MMP}, $f$ is an unramified conic bundle.
Since $f^{\ast}H^2(S,\Z)\subset H^2(X_{r},\Z)$ is a hyperplane which is contained in the vanishing locus of the cubic form, it follows from (\ref{H*X}) that the subspace $f^{\ast}H^2(S,\Z)\subset H^2(X_{r},\Z)$ is determined uniquely up to at most three possibilities.
Therefore, the line in $H^4(X_r,\Q)$ that is spanned by the class of a fibre of $f$ is also determined up to at most three possibilities.
Since $\omega$ is general, $p_1(X_{r})$ is not contained in any of those lines.
This contradicts Lemma \ref{lem:p1(Mfs)}, which finishes the proof of the theorem.  
\end{proof}

\begin{remark} 
Any simply connected spin $6$-manifold $M$ with torsion free cohomology can be written as a connected sum $M=M_0\sharp^{b_3/2} (S^3\times S^3)$, with $H^3(M_0,\Z)=0$;
the diffeomorphism type of $M_0$, called the core of $M$, is uniquely determined by $M$, cf.\ \cite[Corollary 1]{OV95}.
In particular, Theorem \ref{thm:b_3=0:2} applies to the core $M_0$ of any such manifold $M$ as long as $b_2(M)>0$.
\end{remark}

\begin{remark} \label{rem:b_3=0:nongeneraltype}
A K\"ahler threefold $X$ with spin structure and $b_3(X)=0$ cannot be of general type.
Indeed, if it was of general type, then we may by Theorem \ref{thm:MMP} assume that it is minimal and so the Miyaoka--Yau inequality (\ref{eq:Yauineq}) holds. 
In particular, $c_1c_2(X)$ is negative, which contradicts Riemann--Roch (\ref{eq:RR}), because $b_1(X)=0$ by Hard Lefschetz. 
\end{remark}

In view of  Theorem \ref{thm:b_3=0} and Remark \ref{rem:b_3=0:nongeneraltype}, it is natural to ask for non-K\"ahler manifolds with the homotopy type of a variety of general type. 
We were unable to find such examples in the literature. 
The next result fills this gap; it applies for instance to arbitrary complete intersection threefolds $X\subset \CP^N$, which might very well be non-spin.

\begin{proposition} \label{prop:b_2=1}
Let $X$ be a simply connected $6$-manifold with $H_2(X,\Z)\cong\Z$.
Then there are infinitely many pairwise non-homeomorphic $6$-manifolds with the same oriented homotopy type as $X$, but which are not homeomorphic to a K\"ahler manifold.
\end{proposition}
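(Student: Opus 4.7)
The plan is to proceed in parallel with the proof of Theorem \ref{thm:b_3=0:2}, with the hypothesis $b_2 = 1$ replacing the spin assumption. The key simplification is that Theorem \ref{thm:MMP} can be replaced by a much simpler MMP analysis: by \cite{mori-annals} and \cite{HP15}, a smooth K\"ahler threefold $Y$ with $b_2 = 1$ whose canonical bundle is not nef must admit an extremal Mori contraction. A divisorial contraction would reduce $b_2$ to $0$, which is impossible for a K\"ahler variety of positive dimension; a Mori fiber space structure with positive-dimensional base would similarly force $b_2$ of the base to be $0$, again impossible. Hence such $Y$ is a smooth Fano variety of Picard number one, and the Iskovskikh--Mukai classification makes this a finite list.

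Using Zhubr's classification of simply connected closed $6$-manifolds \cite{zhubr} (applied without the spin hypothesis), I construct infinitely many pairwise non-homeomorphic $6$-manifolds $M_r$ with the same oriented homotopy type as $X$ by varying the first Pontryagin class in a fixed residue class modulo an integer $N$ depending on the remaining invariants. Novikov's topological invariance of rational Pontryagin classes ensures that these $M_r$ are pairwise non-homeomorphic provided $|p_1(M_r)| \to \infty$.

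Assume for contradiction that infinitely many $M_r$ are homeomorphic to K\"ahler threefolds $X_r$. Each $X_r$ has the cohomology ring of $X$ (so $b_2 = 1$) and $p_1(X_r) = p_1(M_r)$. By the MMP analysis above, either $X_r$ belongs to the finite Iskovskikh--Mukai list, or $K_{X_r}$ is nef and Proposition \ref{prop:gentype} bounds $|c_1^3|$ in terms of the Betti numbers. Writing $c_1(X_r) = k_r x$ with $x$ a generator of $H^2(X_r,\Z) \cong \Z$, the integer $d := x^3 \in H^6(X_r,\Z) = \Z$ is nonzero, since it is proportional to $\int_{X_r}\omega^3$ for any K\"ahler form $\omega \in \R\cdot x$. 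Hence a bound on $|c_1^3| = |k_r^3 d|$ yields a bound on $|k_r|$. Combining this with $c_1 c_2(X_r) = 24\chi(\OO_{X_r})$ bounded by Hodge (hence Betti) numbers, and the identity $p_1(X_r) = c_1^2 - 2 c_2$, one obtains a bound on $|p_1(X_r)|$, contradicting $|p_1(M_r)| \to \infty$.

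The main obstacle is the borderline Calabi--Yau case $c_1(X_r) = 0$: then $c_1 c_2 = 0$ gives no control on $c_2$, so $|p_1(X_r)| = 2|c_2(X_r)|$ may a priori be unbounded. Here one appeals to Koll\'ar's boundedness result \cite[Theorem 4.2.3]{kollar}, which together with the fixed cohomology ring and the topological invariant $c_3 = \chi_{\mathrm{top}}(X)$ implies that only finitely many Calabi--Yau deformation types can occur among the $X_r$, and hence only finitely many values of $p_1$.
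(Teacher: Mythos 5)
Your reduction to the three cases (Fano with $\rho=1$, $K$ ample, $K$ numerically trivial) and your treatment of the first two agree in substance with the paper's proof: boundedness of Fano threefolds, and the Miyaoka--Yau inequality via Proposition \ref{prop:gentype} to bound $|c_1^3|$, hence $|k_r|$, hence $c_2$ and $p_1$ in the general type case. The problem is exactly where you locate it, in the Calabi--Yau case, and your fix does not work. Koll\'ar's Theorem 4.2.3 asserts boundedness of K\"ahler manifolds with $b_2=1$ \emph{given the cohomology ring and the Pontryagin classes}; the Pontryagin classes are part of the input, so invoking it to conclude that only finitely many values of $p_1$ occur is circular. (The paper even remarks on this: ``for applications like Theorem \ref{thm:b_3=0} \dots it is crucial not to fix the Pontryagin classes.'') Without that, you would need an a priori upper bound on $c_2(X_r)\cdot x$ for Calabi--Yau threefolds with $\rho=1$ and fixed cubic form $x^3=d$, and no such unconditional bound is known --- this is essentially the open boundedness problem for Calabi--Yau threefolds of Picard number one. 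So as written, the argument fails precisely in the one case it needed to handle.

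The paper sidesteps this by a one-sided estimate rather than a two-sided bound. Miyaoka's inequality \cite[Theorem 1.1]{miyaoka87} gives $c_2(Y)\cdot L\geq 0$ for the ample generator $L$ in the Calabi--Yau case, hence $p_1(Y)\cdot L=-2c_2(Y)\cdot L<0$ (strict, since $c_2\cdot L=0$ would force $Y$ to be an \'etale torus quotient, impossible for simply connected $Y$). One then chooses the $p_1(M_i)$ not merely of large absolute value but of the \emph{sign} violating this inequality: such $M_i$ can carry no Calabi--Yau structure at all, and the remaining two cases are excluded by divisibility/boundedness as you argued. If you add this sign condition to your construction of the $M_r$ (which your setup permits, since you are free to choose the residue class and the sign of $p_1$), your proof closes and coincides with the paper's.
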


\begin{proof} 
Let $Y$ be a K\"ahler threefold which is homotopy equivalent to $X$.
Since $b_2(Y)=1$, $Y$ is either Fano, Calabi-Yau or $K_Y$ is ample.
Moreover, the ample generator $L$ of $H^2(Y,\Z)$ is uniquely determined by the topological property $L^3>0$.
If $Y$ is Calabi-Yau, then $p_1(Y)=-2c_2(Y)$ and so
\begin{align} \label{eq:miyaoka}
p_1(Y).L<0 
\end{align} 
by Miyaoka's inequality \cite[Theorem 1.1]{miyaoka87}.

In order to construct infinitely many pairwise non-homeomorphic manifolds $M_i$ with the homotopy type of $X$, we may now proceed similar to the proof of Theorem \ref{thm:b_3=0}.
By the classification of simply connected $6$-manifolds with torsion free cohomology, see \cite[Section 1]{OV95}, we can choose $p_1(M_i)$ sufficiently divisible and of a sign which violates Miyaoka's inequality (\ref{eq:miyaoka}).  
It follows that $M_i$ cannot support a Calabi--Yau structure.
For sufficiently divisible $p_1(M_i)$, 
 the cases where the canonical bundles are ample or anti-ample cannot happen either, because the corresponding families are bounded by the boundedness of Fano threefolds and Matsusaka's big theorem together with the Miyaoka--Yau inequality, respectively, cf.\ \cite[Proposition 28]{schreieder-GT}.
\end{proof}

\begin{remark}
Borel's conjecture predicts that a homotopy equivalence between closed manifolds with contractible universal cover is homotopic to a homeomorphism; this is known for large classes of fundamental groups, such as hyperbolic groups, see \cite{bartels-lück}. 
Thus the assumption on $\pi_1(X)$ 
in Theorem \ref{thm:b_3=0:2} and Proposition \ref{prop:b_2=1}  
is important. 
\end{remark} 

By Remark \ref{rem:b_3=0:nongeneraltype}, the following question remains open, but see Corollary \ref{cor:6mfd-not-gentyp} below, where a weaker statement is proven.

\begin{question}
Is there a non-K\"ahler manifold with large $b_2$ 
that has the oriented homotopy type of a smooth complex projective variety of general type?
\end{question}


As mentioned in the introduction, most topological constraints known for K\"ahler or projective manifolds are only sensitive to the homotopy type, but not to the homeomorphism type.
For instance, Voisin used restrictions on the cohomology algebra to produce K\"ahler manifolds that are not homotopy equivalent to smooth complex projective varieties \cite{voisin-kodaira}, but the following question remained open. 

\begin{question} \label{question:voisin}
Is there a K\"ahler manifold which has the oriented homotopy type but not the homeomorphism type of a smooth complex projective variety?
\end{question}

\section{Proof of Theorem \ref{thm:deformations}} \label{sec:gentype}  

Theorem \ref{thm:deformations} stated in the introduction follows from the following result.

\begin{theorem}\label{thm:deformations:2} 
Let $b\in \N$.
Then there are only finitely many deformation types of smooth complex projective threefolds $X$ of general type, such that $c_1(X)$ is numerically divisible and $b_i(X)\leq b$ for all $i$. 
\end{theorem}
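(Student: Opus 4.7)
Since $X$ is of general type and hence not uniruled, none of the three Mori fibre space outputs of Theorem~\ref{thm:MMP} is possible; the theorem thus yields a sequence of smooth-point blow-downs
$$
X = Y_r \to Y_{r-1} \to \dots \to Y_0 = Y
$$
to a smooth K\"ahler minimal threefold $Y$ of general type with $c_1(Y)$ numerically divisible by $2$. A smooth-point blow-down increases only $b_2$ and $b_4$ by exactly $1$, so the number of blow-downs is at most $b_2(X) \leq b$ and all Betti numbers of $Y$ are bounded by $b$. Blowing up a varying configuration of $r$ points on a fixed $Y$ yields a connected family, hence a single deformation type, so the reduction from $X$ to $Y$ is harmless for the finiteness statement; it therefore suffices to bound the number of deformation types of such $Y$.

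Being minimal of general type, $Y$ is projective (Moishezon plus K\"ahler), and the Miyaoka--Yau inequality \eqref{eq:Yauineq} combined with Riemann--Roch yields
$$
0 < K_Y^3 = -c_1^3(Y) \leq -\tfrac{8}{3}\, c_1 c_2(Y) = -64\, \chi(\OO_Y).
$$
The Hodge decomposition $b_k = \sum_{p+q=k} h^{p,q}(Y)$ together with $h^{p,q} \geq 0$ bounds all Hodge numbers in terms of $b$, hence bounds $\chi(\OO_Y)$ and therefore $K_Y^3$, uniformly in $b$.

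The main and hardest step is to deduce boundedness of the deformation type of $Y$ from the bound on $K_Y^3$. My approach is to use effective pluricanonical birationality: by Chen--Chen, $|5K_Y|$ induces a birational map $\phi$ from $Y$ onto its image, and $h^0(Y,5K_Y)$ is bounded via Riemann--Roch and Kawamata--Viehweg vanishing in terms of $K_Y^3$ and $\chi(\OO_Y)$. The image has degree at most $(5K_Y)^3$ and sits in a projective space of bounded dimension, so it lies in a bounded subscheme of the corresponding Hilbert scheme; identifying this image with the canonical model $Y^{\can}$, only finitely many deformation types of canonical models occur. Since $Y$ is a crepant terminal modification of $Y^{\can}$ and Kawamata's theorem ensures that only finitely many minimal models sit over a given canonical model, this yields the desired finiteness for $Y$, and combining with the first step concludes the argument. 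The hard part is precisely the boundedness argument at the end; the divisibility hypothesis, while invisible in that final step, is essential upstream in the MMP, to keep all intermediate contractions in the smooth category (no flips and only contractions to smooth points), without which the initial reduction of $X$ to a smooth minimal $Y$ would break down.
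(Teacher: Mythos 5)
Your first half---the MMP reduction to a smooth minimal model $Y$ reached by at most $b_2(X)-1$ blow-downs to smooth points, the observation that general type rules out the Mori fibre space outputs of Theorem~\ref{thm:MMP}, and the bound on $K_Y^3$ via the Miyaoka--Yau inequality \eqref{eq:Yauineq} and Riemann--Roch---is exactly the paper's argument. The divergence, and the gap, lies in the final step. A first, repairable, inaccuracy: $|5K_Y|$ is \emph{not} birational for an arbitrary smooth threefold of general type (the effective bound of Chen--Chen is $m\geq 61$, and there are examples requiring $m$ far larger than $5$); what is true, and what the paper actually invokes, is that bounded volume yields a uniform such $m$ by Hacon--McKernan, Takayama and Tsuji, whence \emph{birational} boundedness.

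The genuine gap is the passage from ``the canonical models $Y^{\can}$ lie in a bounded family'' to ``the smooth minimal models $Y$ form finitely many deformation types.'' Kawamata--Matsuki finiteness of minimal models concerns a single fixed variety; it neither bounds the number of minimal models uniformly as $Y^{\can}$ varies, nor---and this is the crux---does it say that minimal models of deformation-equivalent canonical models are themselves deformation equivalent. A deformation of $Y^{\can}$ does not automatically induce a deformation of $Y$: the minimal model can jump, and crepant terminal modifications need not deform in families. This is precisely where the paper's proof invests its effort: it resolves the birationally bounding family, runs the MMP in families (Koll\'ar--Mori, Theorem 12.4.2) to get a family $\pi$ of minimal models, constructs a second family $\pi'$ whose fibres realise \emph{all} minimal models of the geometric generic fibres of $\pi$ (using finiteness of minimal models, the fact that they are connected by flops, and a spreading-out over a finite cover of the base), and finally, for an arbitrary minimal $X$ birational to a possibly special fibre $X_0$ of $\pi$, deforms the connecting sequence of flops over an analytic neighbourhood of the base point (Koll\'ar--Mori, Theorem 11.10) so as to exhibit $X$ as a deformation of a very general fibre, hence of a fibre of $\pi'$. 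Without some version of this ``flops in families'' mechanism, your concluding sentence ``this yields the desired finiteness for $Y$'' is an assertion rather than a proof.
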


\begin{proof} 
Let $X$ be a smooth complex projective threefold of general type such that $c_1(X)$ is numerically divisible.
By Theorems \ref{thm:MMP} and \ref{thm:p>2}, there is some $r\geq 0$ and a smooth complex projective minimal threefold $Y$ of general type such that $X$ is obtained from $Y$ by a finite sequence of $r$ blow-ups along points.
Clearly, any variety that is obtained from $Y$ by a sequence of $r$ blow-ups along points is deformation equivalent to $X$.
Moreover, the number $r$ of blow-ups is bounded from above by $b_2(X)-1$.

By \cite[Theorem 1.2]{cascini-tasin} (or Proposition \ref{prop:gentype} above), there is a constant $c$ which depends only on the Betti numbers of $X$ such that 
$$
\operatorname{vol}(Y,K_Y)\leq c .
$$ 
In order to prove Theorem \ref{thm:deformations:2}, it is therefore enough to prove that there are only finitely many deformation equivalence classes of smooth complex projective minimal threefolds of general type and with volume bounded by $c$.

By \cite{HM06,takayama,tsuji2}, threefolds of  general type and with bounded volume are birationally bounded.
That is, there is a projective morphism of normal quasi-projective schemes $\pi^{bdd}: \mathcal X^{bdd} \longrightarrow B$ such that any smooth complex projective threefold of general type and of volume at most $c$ is birational to a fibre of $\pi^{bdd}$. 
By Noetherian induction, replacing $B$ by a disjoint union of locally closed subsets and resolving $\mathcal X^{bdd}$, we get a smooth projective family $\pi^{sm}: \mathcal X^{sm} \longrightarrow B$. 
By the deformation invariance of plurigenera \cite{siu}, we may assume that any fibre of $\pi^{sm}$ is of general type.
Up to replacing $B$ by a finite \'etale covering, we can apply the MMP in families \cite[Theorem 12.4.2]{KM92} to $\pi^{sm}$ and obtain a projective family $\pi: \mathcal X \longrightarrow B$ of minimal models of general type, such that any smooth complex projective threefold of general type and with volume bounded by $c$ is birational to a fibre of $\pi$. 

In the next step of the proof, we use the family $\pi$ to construct a second family $\pi':\mathcal X'\longrightarrow B'$ with the following property:
\begin{enumerate}
\item[($\ast$)] 
if $b\in B_i$ is a very general point  of some component $B_i$ of $B$, then the fibre $X_b=\pi^{-1}(b)$ has the property that any of its minimal models is isomorphic to a fibre of $\pi':\mathcal X'\longrightarrow B'$.
\end{enumerate}

In order to explain our construction, let $B_i$ be a component of $B$ and consider the geometric generic fibre $\mathcal X_{i,\overline {\eta}}$ of $\mathcal X_i\longrightarrow B_i$.
The number of minimal models of $\mathcal X_{i,\overline {\eta}}$ is finite by \cite{KM87} and they are all connected by flops by \cite{kollar89}.
If $\mathcal X_{i,\overline {\eta}}'$ is such a minimal model, then there is a sequence of flops $\mathcal X_{i,\overline {\eta}}\dashrightarrow \mathcal X_{i,\overline {\eta}}'$.
This sequence of flops is defined over some finite extension of the function field $k(B_i)$, hence over the ring of regular functions of some affine variety $B_i'$ which maps finitely onto a Zariski open and dense subset of $B_i$.
In particular, we can spread $\mathcal X_{i,\overline {\eta}}'$  over $B_i'$ to get a family 
$$
\mathcal X'_{i}\longrightarrow B_i' ,
$$ 
whose geometric generic fibre is $\mathcal X_{i,\overline {\eta}}'$.

We define $\pi':\mathcal X'\longrightarrow B'$ to be the disjoint union of all such families $\mathcal X'_{i}\longrightarrow B_i'$ that we can construct in the above way from all the minimal models of all geometric generic fibres $\mathcal X_{i,\overline \eta}$ of $\pi$.
In particular, the number of irreducible components of $B'$ corresponds to the number of minimal models of the geometric generic fibres $\mathcal X_{i,\overline \eta}$ and so $B'$ has finitely many components.
By construction, any minimal model of any $\mathcal X_{i,\overline \eta}$ appears as geometric generic fibre of $\pi'$ over some component of $B'$.
Since the geometric generic fibre is abstractly (as scheme over $\Z$) isomorphic to any very general fibre, this implies that property ($\ast$)  holds; for convenience of the reader we give some details below.
\begin{lemma}
Property ($\ast$) holds.
\end{lemma}
\begin{proof}
Let $X_b$ be the fibre above a very general point $b\in B_i$; and let $k$ be a finitely generated extension of $\Q$ over which $B_i$ can be defined.
Since $B_i'$ is affine, $B_i\subset \mathbb A^N$ for some $N$ and so we may write $b=(b_1,\dots ,b_N)$. 
We can choose an isomorphism of fields 
$$
\sigma:\C\stackrel{\sim} \longrightarrow \overline{\C(B_i)} ,
$$ 
which restricts to an isomorphism between the subfield $k(b_1,\dots ,b_N)$ and the function field of $B_i$ over $k$. 
The field automorphism $\sigma$ induces an isomorphism 
$$
\varphi: X_b \longrightarrow   \mathcal X_{i,\overline \eta} 
$$  
of schemes over $\Z$ (on stalks, this map is not $\C$-linear but $\sigma^{-1}$-linear). 
Any sequence of flops $X_b\dashrightarrow X^+_b$ corresponds via $\varphi$ to a sequence of flops of the geometric generic fibre $\mathcal X_{i, \overline \eta}\dashrightarrow \mathcal X_{i, \overline \eta}^+$. 
Moreover, $\mathcal X_{i, \overline \eta}^+$ is a minimal model of $\mathcal X_{i, \overline \eta}$ because it is obtained by a sequence of flops.
By the construction of the family $\pi'$, $\mathcal X_{i, \overline \eta}^+$ is isomorphic to the geometric generic fibre of $\pi'$ over some component $B'_j$ of $B'$:
$$
\mathcal X_{i, \overline \eta}^+\cong \mathcal X'\times_{B'}\overline {\C(B'_j)} .
$$
By construction of $\pi'$, $B'_j$ maps finitely onto a Zariski open subset of $B_i$, which yields an identification $\overline {\C(B_i)}= \overline {\C(B'_j)}$.
Therefore, $\sigma$ induces a ($\sigma$-linear) isomorphism 
$$
\psi:\mathcal X_{i, \overline \eta}^+\longrightarrow X'_{b'}
$$ 
of schemes over $\Z$, where $X'_{b'}={\pi'}^{-1}(b')$ is a fibre of $\pi'$ for some point $b'\in B_j'$ which maps to $b$ via $B_j'\longrightarrow B_i$. 
Moreover, the composition
$$
X_b \stackrel{\varphi}\longrightarrow  \mathcal X_{i,\overline \eta}\dashedlongrightarrow \mathcal X_{i,\overline \eta}^+ \stackrel{\psi}\longrightarrow  X_{b'}' ,
$$
is a rational map $X_b\dashrightarrow X'_{b'}$ which is $\C$-linear on stalks.
Therefore, $X_b\dashrightarrow X'_{b'}$ is a sequence of flops of complex projective varieties, which identifies $X_{b'}'$ with $X_b^+$. 
This proves that ($\ast$) holds.
\end{proof}

Let us now consider the following set of deformation equivalence classes of smooth fibres $X'_{b'}:={\pi'}^{-1}(b')$ of $\pi'$:
$$
\mathcal S:=\{[X'_{b'}] \mid  b'\in B'\ \text{such that $X'_{b'}$ is smooth}\} .
$$ 
Since $B'$ has finitely many irreducible components, and since smoothness is an open condition,  $\mathcal S$ is finite. 

To conclude, let $X$ be any smooth complex projective minimal threefold of general type and with $\operatorname{vol}(X,K_X)\leq c$. 
We claim that the deformation equivalence class $[X]$ belongs to $\mathcal S$, which implies the theorem. 
To prove the claim, recall that $\pi:\mathcal X\longrightarrow B$ is a family of complex projective minimal threefolds with the property that any smooth threefold of general type whose volume is bounded by $c$ is birational to a fibre of $\pi$. 
In particular, there is a component $B_{i_0}$ and a point $0\in B_{i_0}$ such that the fibre $X_0$ above $0$ is birational to $X$.
Then, since $X$ and $X_0$ are birational minimal models, they are connected by a sequence of flops.
Therefore, by \cite[Theorem 11.10]{KM92}, we can find an analytic neighbourhood $U\subset B_{i_0}$ of $0 \in B_{i_0}$, such that the base change $\mathcal X_U \longrightarrow U$ admits a sequence of flops to get a family $\mathcal X^+_U \longrightarrow U$ whose central fibre is isomorphic to $X$. 
Since $X$ is projective, we may by \cite[Theorem 12.2.10]{KM92} assume that all fibres of $\mathcal X^+_U \longrightarrow U$ are projective.
Moreover, a very general fibre $X^+_t$ of $\mathcal X^+_U \longrightarrow U$ is connected to a very general fibre of $\mathcal X_U \longrightarrow U$ by a sequence of flops.
Therefore, $X^+_t$ is a minimal model of a very general fibre of $\mathcal X_{i_0} \longrightarrow B_{i_0}$.
Thus, by $( \ast )$, $X^+_t$ is isomorphic to a fibre of $\pi'$.
Since $X$ is smooth, $X^+_t$ is smooth, which implies 
$
[X]=[X^+_t] \in \mathcal S .
$
This finishes the proof of Theorem \ref{thm:deformations:2}.  
\end{proof}

\begin{remark} \label{rem:newpaper}  
Generalizing the above argument, we have shown in a joint work with Martinelli  \cite{MST16} that in arbitrary dimension, minimal models of general type and bounded volume form a bounded family. 
\end{remark}

\begin{remark} \label{rem:b2=2:unbounded}
The $\CP^1$-bundle $X_n:=\CP(\mathcal O_{\CP^2}(2n+1)\oplus \mathcal O_{\CP^2})$ over $\CP^2$ is a simply connected spin threefold with torsion free cohomology, $b_2(X_n)=2$ and $b_3(X_n)=0$.
However, $c_1^3(X_n)$ is unbounded for $n\to \infty$, and so the $X_n$'s do not belong to a finite number of deformation types.
Therefore, the general type assumption is needed in Theorem \ref{thm:deformations}.
The next example shows that the spin assumption is also necessary.
\end{remark}

\begin{example} \label{ex:deformations:gentype}
There are infinitely many deformation types of smooth projective threefolds of general type and with bounded Betti numbers.
Indeed, we start with a threefold $Y$ of general type which contains a smooth quadric surface $Q$.
Then, $Q$ contains smooth rational curves $C_i\subset Q$ such that the degree of the normal bundle of $C_i$ in $Q$ is unbounded.
The blow-up $X_i:=Bl_{C_i}Y$ satisfies
$
K_{X_i}^3=K_Y^3+2\deg(N_{C_i\slash X})+6 
$, 
see \cite[Proposition 4.8]{cascini-tasin} and \cite[Proposition 14]{OV95}.
Hence, $c_1^3(X_i)$ is unbounded in $i$, although the Betti numbers of $X_i$ do not depend on $i$.  
\end{example}

Using the classification of simply connected spin $6$-manifolds whose cohomology is not necessarily torsion free, see \cite{zhubr}, Theorem \ref{thm:deformations} implies easily the following; the details are analogous (but simpler) to the proof of Theorem \ref{thm:b_3=0} and so we leave them out.

\begin{corollary} \label{cor:6mfd-not-gentyp}
Let $X$ be any simply connected 
K\"ahler threefold with spin structure. 
Then there are infinitely many pairwise non-homeomorphic simply connected closed spin $6$-manifolds $M_i$, that have the same oriented homotopy type as $X$,
but which are not homeomorphic to any smooth complex projective variety of general type.  
\end{corollary}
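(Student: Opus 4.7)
The plan is to mimic the proof of Theorem \ref{thm:b_3=0:2}, but use Zhubr's more general classification \cite{zhubr} of simply connected closed spin $6$-manifolds (which permits arbitrary torsion in cohomology) and to replace the bimeromorphic analysis by a direct application of Theorem \ref{thm:deformations:2}.

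First I would fix a linear form $\alpha \in H^2(X, \Z)^\vee$ that is divisible by $48$ and has nonzero image in $H^2(X, \Z)^\vee/\text{tors}$; such an $\alpha$ exists because $b_2(X) \geq 1$ for any simply connected K\"ahler threefold. For each integer $r \equiv 1 \pmod{48}$, I would invoke Zhubr's realization theorem to produce a simply connected closed spin $6$-manifold $M_r$ whose topological invariants match those of $X$ except that its first Pontryagin class is $p_1(X) + r\alpha$; admissibility is preserved because $\alpha \equiv 0 \pmod{48}$. By Zhubr's classification, the oriented homotopy type of a simply connected closed spin $6$-manifold depends only on the reduction of $p_1$ modulo $48$ (with the remaining invariants fixed), so each $M_r$ is oriented homotopy equivalent to $X$. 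Novikov's theorem ensures that the rational image of $p_1$ in $H^4_{tf}$ is a homeomorphism invariant, making the $M_r$ pairwise non-homeomorphic.

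Next I would argue by contradiction. Suppose that infinitely many $M_r$ are homeomorphic to smooth complex projective threefolds $Y_r$ of general type. The spin condition depends only on homotopy type (via the Wu formula), so each $Y_r$ is spin, and all $Y_r$ share the Betti numbers of $X$. Theorem \ref{thm:deformations:2} then yields only finitely many deformation types among the $Y_r$; in particular $p_1$, being a diffeomorphism invariant, takes only finitely many values across these finitely many families. This contradicts the fact that $p_1(M_r) = p_1(X) + r\alpha$ attains infinitely many distinct values in $H^4_{tf}(X, \Z)$ as $r$ varies.

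The main technical point is the correct invocation of Zhubr's classification in the presence of torsion: one must confirm that the prescribed invariants form an admissible tuple in his sense and that the oriented homotopy type is genuinely controlled by the congruence modulo $48$. Once this input from \cite{zhubr} is granted, the argument is considerably simpler than that of Theorem \ref{thm:b_3=0:2}, because Theorem \ref{thm:deformations:2} subsumes the entire minimal model program analysis and there is no need to separately rule out Fano or Mori fibre space structures, which constituted the delicate part of that earlier proof.
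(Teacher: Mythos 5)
Your proposal is correct and follows essentially the route the paper intends: the paper explicitly leaves the details out, saying they are ``analogous (but simpler) to the proof of Theorem \ref{thm:b_3=0}'', and your argument --- perturbing $p_1$ by multiples of $48$ within Zhubr's classification to keep the homotopy type while changing the homeomorphism type via Novikov, then deriving a contradiction from the finiteness of deformation types in Theorem \ref{thm:deformations:2} --- is exactly that sketch filled in. Your observation that the general-type hypothesis lets Theorem \ref{thm:deformations:2} replace the case analysis of Fano and Mori fibre space structures matches the paper's remark that this proof is simpler than that of Theorem \ref{thm:b_3=0:2}.
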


Corollary \ref{cor:6mfd-not-gentyp} has the following amusing consequence: either there is a large supply of examples of homotopy equivalent K\"ahler manifolds with different Kodaira dimensions, or one can produce many more smooth manifolds which do not carry a K\"ahler structure although they have the homotopy type of a smooth complex projective variety of general type.
The known examples of homotopy equivalent K\"ahler manifolds with different Kodaira dimensions are rather scarce and come usually from particular complex surfaces, see for instance \cite{kollar}; it seems unlikely that they form a really big class.

\section{Mori fibre spaces}

In this section, we analyse the Chern numbers of  smooth Mori fibre spaces with spin structure  in dimension three.
This will be used in the proof of Theorem \ref{thm:w2=0} in Section \ref{sec:bounded}.

\subsection{Mori fibre spaces over curves} \label{sec:Mfs:curves}

In this subsection we prove the following result.

\begin{proposition} \label{prop:MFS:curves}
Let $(X_i)_{i\geq 0}$ be a sequence of compact K\"ahler manifolds of dimension $n$, admitting the structure of a Mori fibre space $X_i\longrightarrow C_i$ over a curve $C_i$. 
Suppose that 
\begin{enumerate}
\item the Betti numbers of the $X_i$'s are bounded;
\item there is an isomorphism $H^{2\ast}_{tf}(X_i,\Z)\stackrel{\sim} \longrightarrow H^{2\ast}_{tf}(X_0,\Z)$ between the even torsion free cohomology algebras, which respects the Pontryagin classes.
\end{enumerate}
Then the images of the Chern classes $c_1(X_i)$ and $c_2(X_i)$ in $H^{2\ast}_{tf}(X_0,\Z)$ are bounded.
In particular, for all $0\leq k\leq n/2$, the sequence of Chern numbers $c_1^{n-2k}c_2^k(X_i)$ is bounded.
\end{proposition}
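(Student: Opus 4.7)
The plan is to reduce to bounding the single class $\tilde c_1 := \phi_i(c_1(X_i))$ in the fixed finite-rank lattice $L := H^2_{tf}(X_0,\Z)$. The identity $p_1 = c_1^2 - 2c_2$ combined with the hypothesis that $\phi_i$ preserves $p_1$ forces $\tilde c_2 := \phi_i(c_2(X_i)) = (\tilde c_1^{\,2} - p_1(X_0))/2$; so once $\tilde c_1$ takes only finitely many values, $\tilde c_2$ does too, and every mixed Chern number $c_1^{n-2k}c_2^k(X_i) = \int_{X_0}\tilde c_1^{\,n-2k}\cdot\tilde c_2^{\,k}$ is automatically bounded because the cup product on $H^{2\ast}_{tf}(X_0,\Z)$ is fixed.

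The first ingredient is Fano boundedness. The general fibre $F$ of $f_i\colon X_i\to C_i$ is a smooth Fano $(n{-}1)$-fold, so by boundedness of smooth Fano varieties of fixed dimension the invariants $(-K_F)^{n-1}$, $c_2(F)\cdot(-K_F)^{n-3}$, etc. lie in a finite set. Since $F$ has trivial normal bundle in $X_i$, adjunction gives $c_k(X_i)|_F = c_k(F)$; intersecting with the fibre class $F_i\in H^2(X_i,\Z)$ translates this into bounded intersection numbers $\tilde c_1^{\,n-1}\cdot\tilde F_i$, $\tilde c_2\cdot\tilde F_i$, etc., where $\tilde F_i := \phi_i(F_i)\in L$. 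Moreover Fano fibres are simply connected, so the Leray spectral sequence yields $b_1(X_i) = 2g(C_i)$, bounding the genus of the base.

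Next I would bound $\tilde F_i$ itself. Two disjoint general fibres give $F_i^{\,k} = 0$ in $H^{2k}(X_i,\Z)$ for every $k\geq 2$, and in particular $\tilde F_i^{\,2} = 0$ in $H^4_{tf}(X_0,\Z)$. In the threefold case this is particularly clean: Poincar\'e duality identifies $H^4_{tf}(X_0,\Q)$ with the dual of $H^2_{tf}(X_0,\Q)$, so $\tilde F_i^{\,2}=0$ is equivalent to $\tilde F_i$ being a singular point of the fixed cubic form $\mathcal F_{X_0}(x) = \int_{X_0} x^3$; the singular locus of a cubic hypersurface in $\PP(H^2_{tf}(X_0,\Q))$ is a proper algebraic subset, and combined with the bounded intersection $\tilde c_1^{\,n-1}\cdot\tilde F_i = (-K_F)^{n-1}$ and primitivity of $\tilde F_i$ up to the bounded multiplicities of possible multiple fibres, this confines $\tilde F_i$ to a finite subset of $L$.

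To finish, decompose $\tilde c_1 = A_i + m_i\tilde F_i$ in $L\otimes \Q$, where $A_i$ is characterised up to $\tilde F_i$-multiples by the condition that its restriction to $F$ equals $-K_F$; by Fano boundedness together with the fact that $f_i$ has relative Picard number one, $A_i$ takes only finitely many values in $L\otimes \Q$. To bound the integer $m_i$, combine the Riemann--Roch identity $c_1c_2(X_i) = 24\,\chi(X_i,\OO_{X_i})$ (bounded by Hodge numbers, hence by Betti numbers, at least for $n=3$) with $\tilde F_i^{\,2} = 0$: expanding $\tilde c_1\cdot \tilde c_2$ in terms of $A_i$ and $m_i$ gives a \emph{linear} equation in $m_i$ whose leading coefficient $(3A_i^{\,2} - p_1(X_0))\cdot\tilde F_i$ is generically nonzero. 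The main technical obstacle is the step bounding $\tilde F_i$ in arbitrary dimension $n$: the Poincar\'e-duality argument above is immediate for $n=3$, but for larger $n$ one would additionally need Hard Lefschetz on the K\"ahler manifold $X_0$ and the higher vanishings $\tilde F_i^{\,k} = 0$ to cut the quadratic locus down to a finite list of rays.
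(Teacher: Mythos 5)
Your overall strategy coincides with the paper's: bound $c_1$, recover $c_2$ from $p_1=c_1^2-2c_2$, exploit Fano boundedness of the general fibre together with the triviality of its normal bundle, and pin down the remaining ``fibre-direction'' coefficient of $c_1$ through a linear equation coming from $\chi(X_i,\OO_{X_i})$. However, two steps are genuine gaps as written. First, the assertion that the leading coefficient $(3A_i^2-p_1(X_0))\cdot\tilde F_i$ of your linear equation is ``generically nonzero'' is not a proof: there is no genericity to exploit here, and the entire argument collapses if that number vanishes for the classes at hand. You must compute it. Since $A_i|_F=-K_F$ and $p_1(X_i)|_F=p_1(F)=c_1^2(F)-2c_2(F)$ (trivial normal bundle), one gets $(3A_i^2-p_1)\cdot\tilde F_i=2\bigl(K_F^2+c_2(F)\bigr)=24\chi(F,\OO_F)=24$ by Noether's formula, up to the normalization of the fibre class --- so it is nonzero, and this is precisely how the paper closes the argument: it identifies the linear coefficient of the unknown coordinate in $\chi(X_i,\OO_{X_i})$ with $\frac{1}{2\lambda_i}\chi(F_i,\OO_{F_i})=\frac{1}{2\lambda_i}\neq 0$.

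Second, the proposition is stated for arbitrary $n$ and you explicitly leave that case unresolved. The fact you are missing --- which also makes your detour through the singular locus of the cubic form unnecessary --- is that $b_2(X_i)=2$: a Mori fibre space over a curve has $h^{2,0}=0$ (the fibres are Fano and $-K$ is relatively ample), so $b_2=\rho=\rho(C_i)+1=2$. With a basis $x$ (primitive, $x^2=0$, proportional to the fibre class) and $y=c_1(X_0)$ of $H^2(X_0,\Q)$, the condition $[F_i]^2=0$ immediately forces $[F_i]=\lambda_i x$ with $\lambda_i\in\Z$, and $\lambda_i$ and the $y$-coefficient $b_i$ of $c_1(X_i)$ are bounded by Fano boundedness applied to $c_1^{n-1}(F_i)=b_i^{n-1}\lambda_i\, y^{n-1}x$. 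The paper then replaces your threefold identity $c_1c_2=24\chi(\OO)$ by the general Riemann--Roch expression $\chi(X_i,\OO_{X_i})=\sum_{s\geq 0}\frac{1}{2^{n+2s}(n-2s)!}c_1^{n-2s}A_s(p_1,\dots,p_s)$, which, because $x^2=0$, is linear in the remaining coordinate $a_i$ with coefficient $\frac{1}{2\lambda_i}$; boundedness of $\chi(X_i,\OO_{X_i})$ by the Betti numbers then bounds $a_i$ in every dimension at once.
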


\begin{proof}
Let $F_i$ denote the general fiber of $X_i\longrightarrow C_i$.
Then
$$
H^2(X_0,\Q)=x\cdot \Q\oplus y\cdot \Q,
$$
where $y=c_1(X_0)$, and $x\in H^2_{tf}(X_0,\Z)$ is primitive such that $[F_0]$ is a positive multiple of $x$.
The ring structure satisfies $x^2=0$, $y^{n-1}x=c_1^{n-1}(F_0)$ and $y^n=c_1^n(X_0)$.
From now on we will use the isomorphism $H^2(X_i,\Q)\cong H^2(X_0,\Q)$ to think about $x$ and $y$ as basis elements of $H^2(X_i,\Q)$.

We can write
$
c_1(X_i) = a_i\cdot x+ b_i\cdot y
$,
for some $a_i,b_i \in \Q$. 
Next, since $[F_i]^2=0$,  $[F_i]=\lambda_i \cdot x $, for some $\lambda_i\in \Z$.\footnote{In fact, $\lambda_i=\pm 1$, because $X_i\longrightarrow C_i$ has a rational section by Grabber--Harris--Starr's theorem, but we do not use this here.}
Moreover, since $F_i\subset X_i$ has trivial normal bundle,
$
c(X_i)|_{F_i}=c(F_i) 
$.
This implies
$$
c_1^{n-1}(F_i)=c_1(X_i)^{n-1}[F_i]= (a_i\cdot x+ b_i\cdot y)^{n-1} \lambda_i \cdot x=b_i^{n-1}\lambda_i \cdot y^{n-1}x.
$$
Since $F_i$ is Fano, $b_i$ and $\lambda_i$ are both nonzero and bounded, because Fano varieties of a fixed dimension form a bounded family. 

What follows is inspired by \cite[Theorem 4.2.3]{kollar}.
We have
$$
\chi(X_i,\mathcal O_{X_i})= \sum_{s\geq 0} \frac{1}{2^{n+2s}(n-2s)!}c_1(X_i)^{n-2s} A_s(p_1,...,p_s),
$$
where $A_s(p_1,...,p_s)$ is a polynomial in the Pontryagin classes.
Since $b_i$ is bounded and $x^2=0$, the above expression is a linear polynomial in $a_i$ with bounded coefficients,
$$
\chi(X_i,\mathcal O_{X_i})=\mu_1(i)\cdot a_i + \mu_0(i) .
$$
Here,
\begin{align*}
\mu_1 (i)&= \sum_{s\geq 0} \frac{1}{2^{n+2s}(n-2s)!}(n-2s)x\cdot (b_iy)^{n-1-2s} A_s(p_1,...,p_s)\\
&=\frac{1}{\lambda_i} [F_i] \cdot  \sum_{s\geq 0} \frac{1}{2^{n+2s}(n-1-2s)!} (b_iy)^{n-1-2s} A_s(p_1,...,p_s)\\
&=\frac{1}{2\lambda_i} [F_i] \cdot  \sum_{s\geq 0} \frac{1}{2^{n-1+2s}(n-1-2s)!} c_1(X_i)^{n-1-2s} A_s(p_1,...,p_s)\\
&=\frac{1}{2\lambda_i}\chi(F_i,\mathcal O_{F_i}) \\
&= \frac{1}{2\lambda_i} ,
\end{align*}
where we used that the Chern and Pontryagin classes of $X_i$ restrict to those of $F_i$.
The above computation shows $\mu_1(i)\neq 0$ for all $i$.

Since the Betti numbers of $X_i$ are bounded, and $X_i$ is K\"ahler, $\chi(X_i,\mathcal O_{X_i})$ attains only finitely many values.
Since $\mu_0(i)$ and $\mu_1(i)\neq 0$ are bounded, $a_i$ is bounded.  
This proves that $c_1(X_i)$ is bounded.
The boundedess of $c_2(X_i)$ follows from $p_1(X_i)=c^2_1(X_i)-2c_2(X_i)$. 
This proves the proposition.
\end{proof}

\begin{remark} \label{rem:MFS:curves}
The above proof shows that for $n=3$, one can replace condition (2) in Proposition \ref{prop:MFS:curves} by the following slightly weaker assumption: 
for each $i$ there is an isomorphism $H^2_{tf}(X_i,\Z)\longrightarrow H^2_{tf}(X_0,\Z)$ which respects the trilinear forms, given by cup products, and the linear forms, given by the first Pontryagin classes.
\end{remark}

\subsection{Unramified conic bundles over surfaces} \label{sec:Mfs:surface}
 
The aim of this subsection is to prove  

\begin{proposition} \label{prop:MFS:surfaces}
Let $(X_i)_{i\geq 0}$ be a sequence of smooth K\"ahler threefolds with the structure of an unramified conic bundle $f_i:X_i\longrightarrow S_i$  over a smooth K\"ahler surface $S_i$.
Suppose that
\begin{enumerate}
\item the Betti numbers of the $X_i$'s are bounded;
\item for each $i$, there is an isomorphism $H^2_{tf}(X_i,\Z)\cong H^2_{tf}(X_0,\Z)$ which respects the trilinear forms given by cup products. 
\end{enumerate} 
Then the sequence of Chern numbers $c_1^3(X_i)$ is bounded. 
\end{proposition}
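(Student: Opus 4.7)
The plan is to reduce $c_1^3(X)$ to two invariants of the conic bundle that can be bounded separately using the two hypotheses. Writing $X=\CP(E)$ for a rank $2$ bundle $E$ on $S$ (only true \'etale-locally in the Brauer--Severi case, but harmless since the arguments below take place in rational cohomology), a direct Chern class computation based on $c_1(X)=2h+f^*(c_1(S)-c_1(E))$ from the relative Euler sequence, together with the Segre relation $h^2=f^*c_1(E)\cdot h-f^*c_2(E)$, yields the intrinsic identity
\[
c_1^3(X)=6\,K_S^2-2\,\Delta(E),\qquad \Delta(E):=4c_2(E)-c_1(E)^2.
\]

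Bounding $K_{S_i}^2$ is straightforward: multiplicativity of Euler characteristics on the smooth $\CP^1$-bundle gives $\chi_{top}(S_i)=\chi_{top}(X_i)/2$, and $f_{i*}\mathcal O_{X_i}=\mathcal O_{S_i}$ together with the vanishings $R^jf_{i*}\mathcal O_{X_i}=0$ for $j>0$ give $\chi(\mathcal O_{S_i})=\chi(\mathcal O_{X_i})$. Both $\chi_{top}(X_i)$ and $\chi(\mathcal O_{X_i})$ are bounded under hypothesis (1) via the K\"ahler inequality $h^{p,q}\leq b_{p+q}$, so Noether's formula $12\chi(\mathcal O_{S_i})=K_{S_i}^2+\chi_{top}(S_i)$ bounds $K_{S_i}^2$.

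The main step is to extract and bound $\Delta(E_i)$ using hypothesis (2). The hyperplane $V_i:=f_i^*H^2(S_i,\Q)\subset H^2(X_i,\Q)$ is isotropic for the trilinear cup-product form, because any triple product of its elements lies in $f_i^*H^6(S_i,\Q)=0$. As already used in the proof of Theorem \ref{thm:b_3=0:2}, for a fixed cubic form there are only finitely many such isotropic hyperplanes, so by (2) the subspace $V_i$ is pinned down up to finitely many options. Once $V_i$ is fixed, choose any lift $h\in H^2(X_i,\Q)$ of a generator of $H^2(X_i,\Q)/V_i$: the symmetric bilinear pairing on $V_i$ sending $(\gamma,\delta)$ to $h\cdot\gamma\cdot\delta\in H^6(X_i,\Q)\simeq\Q$ is identified with the (non-degenerate) intersection form of $S_i$, and the linear functional on $V_i$ sending $\gamma$ to $h^2\cdot\gamma$ is represented by a unique class $\xi\in V_i$, namely $\xi=f_i^*c_1(E_i)$. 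Combined with $h^3=c_1(E_i)^2-c_2(E_i)$, this yields the intrinsic formula
\[
\Delta(E_i)=3\xi^2-4h^3,
\]
where $\xi^2$ denotes self-intersection with respect to the intersection form on $V_i$. A direct check shows the right-hand side is independent of the choice of lift $h$: replacing $h$ by $h+f_i^*\lambda$ amounts to twisting $E_i$ by $\mathcal O_{S_i}(\lambda)$, under which $\Delta$ is invariant. Hence $\Delta(E_i)$ takes only finitely many values, which together with the boundedness of $K_{S_i}^2$ bounds $c_1^3(X_i)$.

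The main obstacle is to justify rigorously the finiteness of isotropic hyperplanes for the cubic cup-product form on $H^2(X_i,\Q)$, a point used only implicitly in the paper so far. The natural approach is to realize these hyperplanes as the zero locus in the Grassmannian $\mathrm{Gr}(b_2-1,H^2(X_i,\Q))$ of an explicit system of cubic equations and to show this locus is zero-dimensional in our setting, using the non-degeneracy of the intersection form on each candidate $V_i$ recovered as above.
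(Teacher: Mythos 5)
Your identity $c_1^3(X)=6K_S^2-2\Delta(E)$ is correct, and it is in fact the paper's own computation in different clothing: the paper writes $K_{X_i}=y+f_i^*z_i$ with $y$ normalized by $y\cdot\ell=-2$ and $y^2\in\ell\cdot\Q$, and obtains $K_{X_i}^3=y^3-6z_i^2$ with $y^3=2\Delta(E_i)$ and $z_i^2=K_{S_i}^2$ (the latter via Lemma \ref{lem:Spin:Mfs}). Your Noether-formula bound on $K_{S_i}^2$ is a fine substitute for that lemma, and the passage to rational cohomology to handle a possibly nontrivial Brauer class is harmless. Also, the finiteness of isotropic hyperplanes that you single out as ``the main obstacle'' is actually the easy step and needs no Grassmannian argument: a nonzero cubic form vanishes on the hyperplane $\{L=0\}$ if and only if $L$ divides it, a cubic has at most three linear factors over $\C$, and the cup cube is nonzero because it is positive on a K\"ahler class. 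This is exactly how the paper pins down $V_i=f_i^*H^2(S_i,\Q)$ after passing to a subsequence.

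The genuine gap is elsewhere, in the normalization of $h$. The quantity $\Phi(h):=3\xi_h^2-4h^3$ is invariant under $h\mapsto h+f_i^*\lambda$, as you check, but it is homogeneous of degree $3$ under $h\mapsto ch$: rescaling $h$ rescales the pairing $(\gamma,\delta)\mapsto h\gamma\delta$ by $c$, hence $\xi_h$ by $c$ and $\xi_h^2$ by $c^3$. Your identification of that pairing with the intersection form of $S_i$, and hence the identity $\Phi(h)=\Delta(E_i)$, hold only for the normalization $f_{i*}h=1$, i.e.\ $h\cdot\ell_i=1$ for the fiber class $\ell_i$; a ``generator of $H^2(X_i,\Q)/V_i$'' is defined only up to an arbitrary rational scalar, so as written $\Phi$ is not well defined, let alone finite-valued. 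To close the gap you must show that the affine hyperplane $\{h:\, f_{i*}h=1\}$ is determined up to finite ambiguity by the data of hypothesis (2). This is exactly the step the paper performs with the fiber class: $\ell_i$ spans the annihilator line of $V_i$ in $H^4$, which is determined by $V_i$, and it is an integral class with $K_{X_i}\cdot\ell_i=-2$, so integrality leaves only finitely many candidates for $\ell_i$ on that line. Equivalently, take $h_i$ an integral generator of $H^2_{tf}(X_i,\Z)$ modulo the saturation of $V_i$; then $f_{i*}h_i=d_i$ divides $2$ because $f_{i*}K_{X_i}=-2$, and $\Delta(E_i)=\Phi(h_i)/d_i^3$ takes only finitely many values. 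With this step added, your argument is complete and essentially coincides with the paper's.
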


We need the following lemma, which is well--known (at least) in the projective case.

\begin{lemma} \label{lem:Spin:Mfs} 
Let $X$ be a smooth K\"ahler threefold admitting the structure of an unramified conic bundle $f:X\longrightarrow S$ over a smooth K\"ahler surface $S$.
Then we have the following numerical equivalence on $S$:
$$
f_{\ast}K_X^2\equiv -4K_S .
$$ 
\end{lemma}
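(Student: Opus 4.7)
The plan is to use that ``unramified'' here forces $f : X \to S$ to be a smooth proper holomorphic map with every fibre a smooth $\CP^1$. This gives the standard relative canonical formula $K_X = f^\ast K_S + K_{X/S}$, so that
\[
K_X^2 \;=\; (f^\ast K_S)^2 + 2\, f^\ast K_S \cdot K_{X/S} + K_{X/S}^2.
\]
I would then push forward via $f_\ast$ and handle the three terms separately using the projection formula.

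For the first term, $f_\ast(f^\ast K_S)^2 = K_S^2 \cdot f_\ast 1_X = 0$ for degree reasons, since $f_\ast$ lowers cohomological degree by $2$ and $f_\ast 1_X$ would live in $H^{-2}(S)=0$. For the cross term, the projection formula yields $f_\ast(f^\ast K_S \cdot K_{X/S}) = K_S \cdot f_\ast K_{X/S}$; since $K_{X/S}|_F = K_{\CP^1}$ has degree $-2$ on each fibre $F$, the fibre integral $f_\ast K_{X/S} \in H^0(S,\Z)$ equals $-2$, and together with the binomial coefficient this contributes $-4 K_S$ to $f_\ast K_X^2$.

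The main step is to show that the last term vanishes, i.e.\ $f_\ast K_{X/S}^2 \equiv 0$. My plan is to apply Grothendieck--Riemann--Roch to $\mathcal O_X$: smoothness together with base change give $R^0 f_\ast \mathcal O_X = \mathcal O_S$ and $R^{>0} f_\ast \mathcal O_X = 0$, so the Chern character of $Rf_\ast \mathcal O_X$ is simply $1$, and GRR reads
\[
1 \;=\; f_\ast\bigl(\Td(T_{X/S})\bigr),
\]
where $c_1(T_{X/S}) = -K_{X/S}$. Using $\Td(T_{X/S}) = 1 - \tfrac{1}{2} K_{X/S} + \tfrac{1}{12} K_{X/S}^2 + \dots$, the degree-zero piece on $S$ recovers $1 = -\tfrac{1}{2} f_\ast K_{X/S}$ (which is consistent with the fibre-degree calculation above), and the degree-two piece reads $0 = \tfrac{1}{12} f_\ast K_{X/S}^2$, which is exactly the vanishing I need.

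Summing the three contributions gives $f_\ast K_X^2 \equiv -4 K_S$, as claimed. I do not anticipate a real obstacle; the only point worth being explicit about is that GRR is being applied in the K\"ahler (rather than algebraic) setting, but this is fine because $f$ is a smooth proper holomorphic map. An alternative route to the vanishing $f_\ast K_{X/S}^2 \equiv 0$, if one prefers to avoid GRR, is to use the conic embedding $X \subset \CP(\mathcal E)$ over $S$ supplied in the proof of Theorem \ref{thm:MMP} and compute directly with the restriction of the tautological class $\mathcal O(1)$; this is more cumbersome but independently confirms the result.
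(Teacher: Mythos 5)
Your proof is correct. The decomposition you use, $f_\ast K_X^2 = f_\ast\bigl((f^\ast K_S)^2 + 2 f^\ast K_S\cdot K_{X/S} + K_{X/S}^2\bigr)$, is exactly the expansion in the paper's second (Kotschick's) argument, written there as $c_1^2(X)=f^\ast c_1^2(S)+2c_1(Tf)f^\ast c_1(S)+c_1^2(Tf)$ with $Tf=T_{X/S}$; the first two terms are handled the same way in both. The only genuine divergence is the key vanishing $f_\ast K_{X/S}^2\equiv 0$. You obtain it from the degree-two component of Grothendieck--Riemann--Roch for $Rf_\ast\mathcal O_X=\mathcal O_S$; the paper instead invokes Smale's theorem that every smooth $S^2$-bundle is the sphere bundle of an oriented real rank-three vector bundle $E$ on $S$, so that $Tf$ is stably isomorphic to $f^\ast E$ and hence $c_1^2(Tf)=p_1(Tf)\in f^\ast H^4(S,\Z)$, which pushes forward to zero. (The paper's primary argument is different again and more elementary: it reduces to intersecting with smooth curves $C$ generating $\NS(S)$ and computes on the minimal ruled surface $f^{-1}(C)$ with normal bundle $f^\ast\mathcal O_S(C)|_{f^{-1}(C)}$.) Your route is cleaner and purely holomorphic, and it applies verbatim to any smooth proper family of rational curves; its one cost is that you must justify GRR in the analytic category, as you note. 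This is legitimate here --- either cite GRR for proper holomorphic maps of compact complex manifolds (O'Brian--Toledo--Tong), or observe that $f$ is a proper holomorphic submersion with $R^0f_\ast\mathcal O_X=\mathcal O_S$ and $R^{>0}f_\ast\mathcal O_X=0$ locally free, so the Atiyah--Singer family index theorem already gives $\operatorname{ch}(Rf_\ast\mathcal O_X)=f_\ast\Td(T_{X/S})$ in de Rham cohomology --- but a reference should be supplied. The paper's topological argument, by contrast, is softer and works for arbitrary smooth $S^2$-bundles, which is what is actually needed in Lemma \ref{lem:p1(Mfs)}.
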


\begin{proof}
Note first that the N\'eron--Severi group $\NS(S)$ is generated by smooth curves $C\subset S$; this is clear if $S$ is projective and it follows easily from the classification of surfaces if $S$ is a non-projective K\"ahler surface, see \cite{barth-etal}. 
It thus suffices to compare the intersection numbers of $f_{\ast}K_X^2$ and $-4K_S$ with a smooth curve $C\subset S$.
This follows from an easy computation, where one uses that  $R:=f^{-1}(C)$  is a minimal ruled surface with normal bundle $f^\ast \mathcal O_S(C)|_R$, cf.\ \cite{mori-mukai} and \cite[Proposition 7.1.8]{iskovski}.
%

The following alternative (and slightly more general) argument was suggested to us by D.\ Kotschick.
We have $TX=f^\ast TS\oplus Tf$, where $Tf=\ker(f_\ast: TX\longrightarrow f^\ast TS)$ is the tangent bundle along the fibres of $f$. 
Hence,
$$
c_1^2(X)=f^\ast c_1^2(S)+2c_1(Tf)f^\ast c_1(S) +c_1^2(Tf) .
$$
As in Lemma \ref{lem:p1(Mfs)}, $Tf$ is stably isomorphic to $f^\ast E$ for a real rank three vector bundle $E$ on $S$. 
Since $Tf$ is a complex line bundle, we deduce $c_1^2(Tf)=p_1(Tf)\in f^\ast H^4(S,\Z)$ and so $f_\ast c_1^2(Tf)=0$.
Hence,
$$
f_\ast c_1^2(X)=
f_\ast(2c_1(Tf)f^\ast c_1(S))=4c_1(S) ,
$$ 
because $c_1(Tf)$ restricts to $c_1(\CP^1)=2$ on each fibre. 
\end{proof} 

\begin{proof}[Proof of Proposition \ref{prop:MFS:surfaces}]
Using the isomorphism $H^2_{tf}(X_i,\Z)\cong H^2_{tf}(X_0,\Z)$ which respects the trilinear forms given by cup products, we identify degree two cohomology classes of $X_i$ with those of $X_0$.
Using Poincar\'e duality, we further identify classes of $H^4_{tf}(X_i,\Z)$ with linear forms on $H^2_{tf}(X_i,\Z)\cong H^2_{tf}(X_0,\Z)$. 

The codimension one linear subspace ${f_i}^*\PP(H^2(S_i,\C))$ of $\CP(H^2(X_0,\C))$ is contained in the cubic hypersurface $\{\alpha\mid \alpha^3=0\}$. 
Passing to a suitable subsequence we can therefore assume that 
$$
f_i^*H^2(S_i,\C)\subset  H^2(X_0,\C)
$$ 
does not depend on $i$. 
Let $\ell_i \in H^4_{tf}(X_0,\Z)$ be the class of a fiber of $f_i$.
The action of this class on $H^2(X_0,\Q)$ has kernel $f_i^*H^2(S_i,\Q)$, and so $\ell_i\cdot \Q$ is independent of $i$.
Since $\ell_i$ is an integral class with $K_{X_i}.\ell_i=-2$, we may after possibly passing to another subsequence thus assume that $\ell_i=\ell$ does not depend on $i$.

For any class $y\in H^2(X_0,\Q)$ which does not lie in ${f_i}^*H^2(S_i,\Q)$, we have
$$
H^2(X_0, \Q)={f_i}^*H^2(S_i,\Q) \oplus y\cdot \Q \ \ \text{and}\ \ H^4(X_0, \Q)={f_i}^*H^2(S_i,\Q)\cdot y \oplus \ell\cdot \Q .
$$
In particular, $y^2=u y + \lambda \ell$ for some $\lambda \in \Q$ and $u \in {f_i}^*H^2(S_i,\Q)$.
Replacing $y$ by a suitable multiple of $y-\frac{1}{2}u$, we may thus assume that 
$$
y.\ell=-2\ \ \text{and}\ \ y^2 \in {f_i}^*H^4(S_i,\Q)=\ell\cdot \Q .
$$
Using this class, we have
$$
K_{X_i}=y+f_i^* z_i ,
$$ 
for some $z_i \in H^2(S_i,\Q)$. 
We claim that $z_i^2\in H^4(S_i,\Q)\cong \Q$ is a bounded sequence, which implies the theorem, because
$$
K_{X_i}^3=y^3+3f_i^* z_i^2\cdot y=y^3-6 z_i^2 ,
$$
and $y^3$ does not depend on $i$.

By Lemma \ref{lem:Spin:Mfs}, we have a numerical equivalence  
$$
{f_i}_*K_{X_i}^2 \equiv -4z_i \equiv -4K_{S_i}. 
$$
In particular, $z_i^2=K_{S_i}^2$ is bounded by the Betti numbers of $X_i$, as we want. 
This finishes the proof of Proposition \ref{prop:MFS:surfaces}.
\end{proof}

\subsection{Chern numbers of Mori fibre spaces with divisible canonical class}

The results of the previous subsections imply the following result.

\begin{corollary} \label{cor:Mfs:Kaehler}
Let $X$ be a smooth K\"ahler threefold which admits the structure of a Mori fibre space $f:X\longrightarrow B$.
If $c_1(X)$ is numerically divisible, then the Chern numbers of $X$ are determined up to finite ambiguity by the following invariants:
\begin{enumerate}
\item the Betti numbers of $X$;
\item the triple $(H^{2}_{tf}(X,\Z),F_X,p_1(X))$, where $F_X$ denotes the cubic form on  $H^{2}_{tf}(X,\Z)$, given by cup product, and $p_1(X)$ denotes the linear form given by the first Pontryagin class. 
\end{enumerate}  
\end{corollary}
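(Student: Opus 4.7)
The plan is to reduce the statement to bounding the single Chern number $c_1^3(X)$ and then to apply the minimal model program classification in the Mori fibre space case together with the case-by-case boundedness results already established. First, I would observe that $c_3(X)=\sum_i (-1)^i b_i(X)$ is the topological Euler characteristic, so it is determined by the Betti numbers alone. By Hirzebruch--Riemann--Roch, $c_1c_2(X)=24\chi(X,\mathcal O_X)$, and Hodge theory bounds the holomorphic Euler characteristic of a compact K\"ahler manifold by its Betti numbers. Consequently, only $c_1^3(X)$ needs to be controlled in terms of the invariants (1) and (2).

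Next I would argue by contradiction and compactness. Suppose there were a sequence $(X_i)_{i\geq 0}$ of smooth K\"ahler threefolds, each a Mori fibre space with numerically divisible first Chern class, sharing the same Betti numbers and a common triple $(H^2_{tf}, F, p_1)$ via some fixed system of isomorphisms, but with $c_1^3(X_i)$ unbounded. Passing to a subsequence, I can assume that all $X_i$ are Mori fibre spaces of the same structural type. Combining Theorems \ref{thm:MMP} and \ref{thm:p>2}, there are only three possibilities: (a) $X_i$ is a smooth Fano threefold (including the special cases $\CP^3$ and a smooth quadric in $\CP^4$); (b) $X_i\longrightarrow C_i$ is a Mori fibre space over a smooth curve, that is, a quadric bundle when $m=2$ or a $\CP^2$-bundle when $m=3$; or (c) $X_i\longrightarrow S_i$ is an unramified conic bundle over a smooth K\"ahler surface, which can only occur for $m=2$.

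Case (a) is handled by the classical boundedness of smooth Fano threefolds: they form only finitely many deformation families, and so $c_1^3(X_i)$ takes only finitely many values, contradicting unboundedness. In case (b), Proposition \ref{prop:MFS:curves} in the refined form of Remark \ref{rem:MFS:curves} applies directly, since hypothesis (2) supplies exactly the required isomorphism of $H^2_{tf}$ respecting both the trilinear cup product form and the linear first Pontryagin form; the proposition then forces $c_1^3(X_i)$ to be bounded. In case (c), hypothesis (2) provides the isomorphism of cubic cup product forms on $H^2_{tf}$ needed to invoke Proposition \ref{prop:MFS:surfaces}, and again the conclusion contradicts the assumed unboundedness of $c_1^3$.

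Since all the substantial work has already been carried out in the MMP classification and in the two Mori fibre space propositions, the argument presents no genuine obstacle. The only point requiring care is the bookkeeping verification that, in each of the three structural cases, the data (1) and (2) really do supply the hypotheses demanded by the corresponding boundedness result. In particular, in case (b) it is essential to invoke Remark \ref{rem:MFS:curves} rather than Proposition \ref{prop:MFS:curves} itself, since hypothesis (2) records $p_1$ only as a linear form on $H^2_{tf}$ and does not give an isomorphism of the full Pontryagin structure on higher even cohomology.
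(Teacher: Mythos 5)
Your proposal is correct and follows essentially the same route as the paper: the paper's proof likewise splits according to $\dim B$, handling the Fano case by boundedness of Fano threefolds, the curve case by Proposition \ref{prop:MFS:curves} together with Remark \ref{rem:MFS:curves} (which, as you rightly note, is the version whose hypotheses are actually supplied by invariant (2)), and the surface case by Theorems \ref{thm:MMP} and \ref{thm:p>2} combined with Proposition \ref{prop:MFS:surfaces}. Your preliminary reduction to bounding $c_1^3$ via $c_3=e(X)$ and $c_1c_2=24\chi(X,\mathcal O_X)$ is also exactly the reduction the paper records in Section \ref{sec:preliminaries}.
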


\begin{proof}
The case where $B$ is a point follows from the boundedness of Fano threefolds \cite{iskovski}.
Next, note that  $F_X$ determines the trilinear form on $H^{2}_{tf}(X,\Z)$, given by cup product. 
If $B$ is a curve, then the assertion follows therefore from Proposition \ref{prop:MFS:curves} and Remark \ref{rem:MFS:curves}, and if $B$ is a surface, we conclude via Theorems  \ref{thm:MMP} and \ref{thm:p>2}, and Proposition \ref{prop:MFS:surfaces}.   
\end{proof}

\section{Proof of Theorem \ref{thm:w2=0}} \label{sec:bounded}
 
In the proof of Theorem \ref{thm:w2=0}, we will use the following result from \cite{cascini-tasin}.

\begin{lemma}[\cite{cascini-tasin}] \label{lem:blowup-point}
Let $Y$ be a smooth complex projective threefold and let $f:X\longrightarrow Y$ be the blow-up of a point of $Y$.
If $E\subset X$ denotes the exceptional divisor of $f$, then
\begin{enumerate}
\item the class $[E]\in H^2_{tf}(X,\Z)$ is determined up to finite ambiguity by the cubic form on $H^2_{tf}(X,\Z)$.
\item the class $[E]\in H^2_{tf}(X,\Z)$ determines uniquely the subspace $f^{\ast}H^2_{tf}(Y,\Z)\subset H^2_{tf}(X,\Z)$.
\end{enumerate}
\end{lemma}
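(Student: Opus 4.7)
The plan is to extract part (2) directly from the intersection data on a point blow-up, and to characterize $[E]$ in part (1) as a rank-one direction for the polarization of $F_X$; finiteness will then follow from Hard Lefschetz together with the geometry of rank-one symmetric tensors.

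I would first record the intersection numbers. Since $E\cong \CP^2$ has normal bundle $\mathcal O_{\CP^2}(-1)$, one computes $[E]^3 = 1$, while $f^*\alpha|_E = 0$ forces $[E]^2\cdot f^*\alpha = 0$ and $[E]\cdot f^*\alpha\cdot f^*\beta = 0$ for all $\alpha, \beta \in H^2(Y,\Z)$. Hence $H^2_{tf}(X,\Z) = f^*H^2_{tf}(Y,\Z) \oplus \Z[E]$, and the cubic form decomposes as
\[F_X(W) = F_Y(\bar W) + y^3,\]
where $\bar W$ is the image of $W$ in $H^2_{tf}(Y,\Z)$ and $y$ is the $[E]$-coordinate. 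Part (2) is then immediate: the linear functional $W\mapsto F_X([E], [E], W) = [E]^2\cdot W$ vanishes on $f^*H^2_{tf}(Y,\Z)$ and sends $[E]\mapsto 1$, so $f^*H^2_{tf}(Y,\Z)$ is recovered as its kernel, which is manifestly determined by $[E]$ and the cubic form.

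For part (1), I would characterize $[E]$ algebraically as a primitive integral $V\in H^2_{tf}(X,\Z)$ satisfying (i) $V^3 = 1$, and (ii) the polarized bilinear form $B_V(W_1, W_2) := V\cdot W_1\cdot W_2$ on $H^2_{tf}(X,\Q)$ has rank at most one. The split form above shows $[E]$ satisfies both, with $B_{[E]} = \ell\otimes\ell$ for $\ell$ the linear form extracting the $[E]$-coordinate. To prove finiteness of the set $\mathcal R$ of such $V$, I would use two ingredients. First, Hard Lefschetz on the smooth projective threefold $X$ (applied to an ample class $H$) implies injectivity of $V\mapsto B_V$: $B_V = 0$ gives $V\cdot H\cdot W = 0$ for all $W$, so $V\cdot H = 0$ in $H^4$, hence $V = 0$. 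Second, if $V_1, V_2$ are linearly independent with $B_{V_i} = \lambda_i \ell_i\otimes\ell_i$ of rank one and $\ell_1, \ell_2$ not proportional, then $B_{V_1 + tV_2}$ has rank two for generic $t$; together with injectivity, this rules out $2$-dimensional linear subspaces inside the rank-$\leq 1$ locus. Since $\mathcal R$ is a closed algebraic cone in $H^2_{tf}(X,\Q)$ (cut out by vanishing of $2\times 2$ minors, polynomial in $V$), it is a finite union of $1$-dimensional linear subspaces; the cubic condition $V^3 = 1$ selects at most one rational point per such line, and primitivity then produces only finitely many integral candidates for $V$.

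The main obstacle I foresee is making the algebro-geometric description of $\mathcal R$ fully rigorous. One must verify that the combination of Hard-Lefschetz injectivity and the rank-one incompatibility for linear $2$-families genuinely forces $\mathcal R$ to break into finitely many lines rather than forming a higher-dimensional cone; this rests on the elementary observation that the rank-one Veronese cone contains no $2$-dimensional linear subspace, which, together with algebraicity of $\mathcal R$, closes the argument.
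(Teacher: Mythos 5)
Your part (2) is correct and is essentially the paper's argument: the paper recovers $f^{\ast}H^2_{tf}(Y,\Z)$ as the kernel of $\cup[E]\colon H^2_{tf}(X,\Z)\to H^4_{tf}(X,\Z)$, while you use the functional $W\mapsto [E]^2\cdot W$; both rest on the same intersection computations ($[E]^3=1$, $[E]^2\cdot f^\ast\alpha=0$, $[E]\cdot f^\ast\alpha\cdot f^\ast\beta=0$) and are fine.

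The gap is in the last step of part (1). From the two correct observations that $V\mapsto B_V$ is injective and that the rank-$\leq 1$ locus of symmetric bilinear forms contains no $2$-dimensional linear subspace, you conclude that the closed algebraic cone $\mathcal R$ must be a finite union of lines. That implication is false: an algebraic cone can be positive-dimensional, hence a union of infinitely many lines, without containing any $2$-plane --- the affine cone over a smooth plane conic in $\C^3$ is already such an example. In your situation $\mathcal R$ is, via the injective linear map $V\mapsto B_V$, identified with a linear section of the cone over the Veronese variety $v_2(\PP^{b_2-1})$, and such sections can perfectly well be cones over positive-dimensional subvarieties of the Veronese: this happens, for instance, whenever the image of $V\mapsto B_V$ contains the plane spanned by the conic $\{\ell_t\otimes\ell_t\}$ attached to a pencil of linear forms $\ell_t=\ell_0+t\ell_1$. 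Were that to occur, $\{V\in\mathcal R\mid V^3=1\}$ would contain a curve, which may carry infinitely many primitive integral points, and your finiteness conclusion would collapse. Nothing in your argument excludes this for the cubic form of a smooth projective threefold; ruling it out requires genuine input about such cubic forms beyond the Hard Lefschetz injectivity you invoke, and this is precisely the content of the results the paper cites for assertion (1), namely \cite[Proposition 13]{OV95} and \cite[Proposition 3.3]{cascini-tasin}, which the paper itself does not reprove. So part (1) of your proposal is incomplete at its decisive step.
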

\begin{proof}
The first assertion follows from \cite[Proposition 13]{OV95} and \cite[Proposition 3.3]{cascini-tasin}.
The second assertion follows from $H^2_{tf}(X,\Z)= f^{\ast}H^2_{tf}(Y,\Z) \oplus [E]\cdot \Z$, which shows that the cup product map
$$
\cup[E]:H^2_{tf}(X,\Z)\longrightarrow H^4_{tf}(X,\Z)
$$
has kernel  $f^{\ast}H^2_{tf}(Y,\Z)$.
\end{proof}
 
The following theorem implies Theorem \ref{thm:w2=0} from the introduction.

\begin{theorem} \label{thm:w2=0:2}
Let $X$ be a smooth K\"ahler threefold such that $c_1(X)$ is numerically divisible.
Then the Chern numbers of $X$ are determined up to finite ambiguity by the following invariants:
\begin{enumerate}
\item the Betti numbers of $X$; \label{item:thmw2=0:1}
\item the cubic form $F_X\in S^3H^{2}_{tf}(X,\Z)^{\vee}$, given by cup product;  
\item the linear form, given by the first Pontryagin class $p_1(X)\in H^2_{tf}(X,\Z)^{\vee}$. \label{item:thmw2=0:3}
\end{enumerate} 
\end{theorem}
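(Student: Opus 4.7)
The plan is to reduce Theorem \ref{thm:w2=0:2} to the two base cases already handled in the paper — Proposition \ref{prop:gentype} for minimal models and Corollary \ref{cor:Mfs:Kaehler} for Mori fibre spaces — by running the minimal model program on $X$ and then tracking how the invariants $(F, p_1)$ and the Betti numbers propagate back up the MMP tower under point blow-downs.

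The first step is to invoke Theorem \ref{thm:MMP} to obtain a sequence of point blow-downs
$$X = Y_r \longrightarrow Y_{r-1} \longrightarrow \dots \longrightarrow Y_0 = Y,$$
where each $Y_i$ is a smooth K\"ahler threefold with $c_1(Y_i)$ numerically divisible by $2$, and $Y$ is either a minimal model or a Mori fibre space.  Each blow-down reduces $b_2$ by one, so $r\leq b_2(X)-1$ is bounded by the Betti numbers of $X$.  The standard blow-up formulas for a smooth point in a threefold, which follow from $c_1(Y_i) = f_i^*c_1(Y_{i-1}) - 2[E_i]$, $[E_i]^3=1$, and the birational invariance of $\chi(\mathcal{O})$, give
\begin{align*}
c_1^3(Y_i) &= c_1^3(Y_{i-1}) - 8, \\
c_1c_2(Y_i) &= c_1c_2(Y_{i-1}), \\
c_3(Y_i) &= c_3(Y_{i-1}) + 2,
\end{align*}
so with $r$ bounded, it suffices to control the Chern numbers of $Y$ and the invariants of each intermediate $Y_i$ up to finite ambiguity.

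If $Y$ is a minimal model, Proposition \ref{prop:gentype} bounds $c_1^3(Y)$ by the Betti numbers, while $c_3(Y)$ is the topological Euler number and $c_1c_2(Y) = 24\chi(Y,\mathcal{O}_Y)$ is controlled by the K\"ahler Betti numbers.  If $Y$ is a Mori fibre space, Corollary \ref{cor:Mfs:Kaehler} directly yields finiteness of its Chern numbers in terms of its Betti numbers together with $F_Y$ and $p_1(Y)$.  The remaining ingredient is the inductive step: given $F_{Y_i}$ and $p_1(Y_i)$, recover $F_{Y_{i-1}}$ and $p_1(Y_{i-1})$ up to finite ambiguity.  By Lemma \ref{lem:blowup-point}(1), the exceptional class $[E_i] \in H^2_{tf}(Y_i,\Z)$ is determined up to finite ambiguity by $F_{Y_i}$, and by Lemma \ref{lem:blowup-point}(2), this class determines the codimension-one subspace $f_i^* H^2_{tf}(Y_{i-1},\Z) \subset H^2_{tf}(Y_i,\Z)$.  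The projection formula identifies $F_{Y_{i-1}}$ with the restriction of $F_{Y_i}$ to that subspace; similarly $p_1(Y_{i-1})$ is the restriction of $p_1(Y_i)$, using the push-forward identity $f_{i*}p_1(Y_i) = p_1(Y_{i-1})$, which follows from $c_2(Y_i)=f_i^*c_2(Y_{i-1})$ together with $f_{i*}[E_i] = f_{i*}[E_i]^2 = 0$.

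Iterating this at most $r$ times yields finitely many candidates for $(F_Y, p_1(Y))$ and for the Betti numbers of $Y$; each produces finitely many candidate Chern numbers for $Y$ via the base case, and hence finitely many candidate Chern numbers for $X$ via the blow-up formulas above.  I expect the main technical point to be verifying the compatibility of $p_1$ with pullback–restriction under a point blow-up — i.e.\ the formula $f_{i*}p_1(Y_i) = p_1(Y_{i-1})$ — together with the observation that the compounded ambiguity across the tower remains finite thanks to the a priori bound $r \leq b_2(X)-1$; everything else is an assembly of Theorem \ref{thm:MMP}, Proposition \ref{prop:gentype}, Corollary \ref{cor:Mfs:Kaehler}, and Lemma \ref{lem:blowup-point}.
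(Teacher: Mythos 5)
Your proposal is correct and follows essentially the same route as the paper: reduction via Theorems \ref{thm:MMP} and \ref{thm:p>2} to a smooth minimal model or Mori fibre space through at most $b_2(X)-1$ point blow-downs, Lemma \ref{lem:blowup-point} to recover $(F_Y,p_1(Y))$ up to finite ambiguity, and then Proposition \ref{prop:gentype} and Corollary \ref{cor:Mfs:Kaehler} as base cases. The only (harmless) difference is that you recover $p_1$ inductively via $f_{i*}p_1(Y_i)=p_1(Y_{i-1})$ where the paper writes the single identity $p_1(X)=f^{\ast}p_1(Y)+4\sum_i f_i^{\ast}[E_i]^2$; note also that for odd divisibility $m\geq 3$ you need Theorem \ref{thm:p>2} rather than Theorem \ref{thm:MMP}, though in that case $X$ is already minimal or one of finitely many special Mori fibre spaces, so the argument is unaffected.
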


\begin{proof} 
Let $X$ be a smooth complex projective threefold such that $c_1(X)$ is numerically divisible.
By Theorems \ref{thm:MMP} and \ref{thm:p>2}, there is a sequence of blow-downs to points in smooth loci 
$$
X=Y_r\longrightarrow Y_{r-1}\longrightarrow \dots \longrightarrow Y_1\longrightarrow Y_0=Y,
$$ 
such that $Y$ is either a smooth minimal model or a smooth Mori fiber space.
Moreover, $c_1(Y)$ is numerically divisible and the number of such blow-downs is bounded by $r\leq \rho(X)-1$, where $\rho(X)$ denotes the Picard number of $X$.

Let $E_i\subset Y_{i}$ be the exceptional divisor of $Y_{i}\longrightarrow Y_{i-1}$ and let $f_i:X\longrightarrow Y_i$ denote the natural map. 
By Lemma \ref{lem:blowup-point}, the class $[E_{r}]\in H^2_{tf}(X,\Z)$ is determined up to finite ambiguity by the cubic form on $H^2_{tf}(X,\Z)$, and it determines the subspace $f_{r-1}^{\ast}H^2_{tf}(Y_{r-1},\Z)\subset H^2_{tf}(X,\Z)$ uniquely.
Repeating this argument $r$ times, we conclude that for all $i$, the classes $f_i^{\ast}[E_i]$ as well as the subspaces $f_i^{\ast}H^2_{tf}(Y_i,\Z)\subset H^2_{tf}(X,\Z)$ are up to finite ambiguity determined by the cubic form on $H^2_{tf}(X,\Z)$. 

In particular, the isomorphism type of $H^2_{tf}(Y,\Z)$ together with the cubic form $F_Y$ given by cup product is up to finite ambiguity determined by the cubic form on $H^2_{tf}(X,\Z)$.
We aim to show that $p_1(Y)$ is determined up to finite ambiguity as well. 
In order to see this, we note
$$
H^2_{tf}(X,\Z)=f^{\ast}H^2_{tf}(Y,\Z) \oplus \bigoplus_{i=1}^r f_i^{\ast}[E_i]\cdot \Z .
$$ 
Moreover,
$$
c_1(X)=f^{\ast}c_1(Y)-2\sum_{i=1}^r  f_i^{\ast}[E_i] \ \ \text{and}\ \ c_2(X)=f^{\ast} c_2(Y) . 
$$
Hence,
\begin{align*}
p_1(X) &= f^{\ast} p_1(Y) +4 \sum_{i=1}^r f_i^{\ast}[E_i]^2 ,
\end{align*}
which proves the claim. 

We have thus proven that the triple $(H^2_{tf}(X,\Z),F_X,p_1(X))$ determines up to finite ambiguity the isomorphism type of $(H^2_{tf}(Y,\Z),F_Y,p_1(Y))$. 
Applying Proposition \ref{prop:gentype} and Corollary \ref{cor:Mfs:Kaehler}, we can therefore bound $c_1^3(Y)$ in terms of the Betti numbers of $X$ and the isomorphism type of $(H^2_{tf}(X,\Z),F_X,p_1(X))$.
Since $[E_i]^3=1$, 
\begin{align} \label{eq:c_1^3(X)}
c_1^3(X)=c_1^3(Y)-8r ,
\end{align} 
with $0\leq r \leq \rho(X)-1$.
Hence, $c_1^3(X)$ is also bounded in terms of the Betti numbers of $X$ and the isomorphism type  of $(H^2_{tf}(X,\Z),F_X,p_1(X))$.  
This proves the theorem.
\end{proof}

\subsection{Explicit bounds for $c_1^3$ of non-uniruled K\"ahler threefolds} 

Using suitable examples of $\CP^1$-bundles over surfaces, Kotschick showed that for uniruled spin threefolds $X$, the Betti numbers do not bound $c_1^3(X)$, see (proof of) \cite[Theorem 4]{kotschick-advances}.
In contrast to that result, the following corollary of the proof of Theorem \ref{thm:w2=0} shows that such a boundedness result is true if we restrict to non-uniruled spin threefolds.

\begin{corollary} \label{cor:ineq}
Let $X$ be a smooth K\"ahler threefold with spin structure. 
If $X$ is not uniruled, then 
$$
0\geq c_1^3(X)\geq \min \left(64\chi(X,\mathcal O_{X}) -8b_2(X)+8,-8b_2(X)+8\right) .
$$ 
\end{corollary}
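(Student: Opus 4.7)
The plan is to combine Theorem \ref{thm:MMP}, Proposition \ref{prop:gentype}, and the formula $c_1^3(X) = c_1^3(Y) - 8r$ appearing as equation (\ref{eq:c_1^3(X)}) in the proof of Theorem \ref{thm:w2=0:2}. Since $X$ is spin, $c_1(X)$ is numerically divisible by $2$, so Theorem \ref{thm:MMP} yields a sequence of $r$ blow-downs to smooth points $X \to \cdots \to Y$, with $Y$ either a minimal model or one of the three types of Mori fibre space listed there. Each of those three types (unramified conic bundles, quadric bundles over curves, and Fano threefolds) is uniruled, so the hypothesis that $X$ is not uniruled — a birational invariant — forces $Y$ to be a minimal model, i.e.\ $K_Y$ is nef. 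Moreover, each blow-up at a point strictly increases $b_2$, so $r \leq b_2(X) - b_2(Y) \leq b_2(X) - 1$.

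For the upper bound, Proposition \ref{prop:gentype} (or its proof) gives $c_1^3(Y) \leq 0$: it is strictly negative if $Y$ is of general type (by Miyaoka--Yau) and equals zero if $\kappa(Y) \in \{0,1,2\}$. Hence $c_1^3(X) = c_1^3(Y) - 8r \leq 0$, which is the desired upper bound.

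For the lower bound I would split into cases according to the Kodaira dimension of $Y$. If $Y$ is of general type, then combining the Miyaoka--Yau inequality (\ref{eq:Yauineq}) with Riemann--Roch $c_1c_2(Y) = 24\chi(Y,\mathcal O_Y)$ gives $c_1^3(Y) \geq \frac{8}{3} c_1c_2(Y) = 64\,\chi(Y,\mathcal O_Y)$. Since $\chi(\mathcal O)$ is invariant under blow-ups of smooth threefolds at smooth points, $\chi(Y,\mathcal O_Y) = \chi(X,\mathcal O_X)$, and therefore
\[
c_1^3(X) = c_1^3(Y) - 8r \;\geq\; 64\,\chi(X,\mathcal O_X) - 8(b_2(X)-1).
\]
If instead $\kappa(Y) \leq 2$, then $c_1^3(Y) = 0$, so $c_1^3(X) = -8r \geq -8(b_2(X)-1)$. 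Taking the minimum of the two bounds yields the corollary. There is no real obstacle: all the ingredients — the MMP reduction, the uniruledness of Mori fibre spaces, the Miyaoka--Yau inequality, the blow-up formula for $c_1^3$, and the birational invariance of $\chi(\mathcal O)$ — are either established in the paper or standard, and the estimate on $r$ is elementary.
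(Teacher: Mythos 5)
Your proof is correct and takes essentially the same route as the paper's: run the MMP of Theorem \ref{thm:MMP}, use non-uniruledness to rule out the Mori fibre space outcome, and combine the Miyaoka--Yau inequality and Riemann--Roch with $c_1^3(X)=c_1^3(Y)-8r$ and $0\leq r\leq b_2(X)-1$. The only cosmetic difference is that the paper excludes the Mori fibre space case by invoking nonnegativity of the Kodaira dimension for non-uniruled K\"ahler threefolds (citing H\"oring--Peternell), whereas you argue directly that all three types of Mori fibre space are uniruled; both are valid.
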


\begin{proof}[Proof of Corollary \ref{cor:ineq}]
We use the notation from the proof of Theorem \ref{thm:w2=0}.
If $X$ is not of general type, then, by \cite[Corollary 1.4]{HP16}, the Kodaira dimension of $X$ satisfies $\operatorname{kod}(X)\in \{0,1,2\}$.
Hence, $Y$ is a minimal model which is not of general type and so $c_1^3(Y)=0$, see for instance \cite{CHP16}.
It then follows from (\ref{eq:c_1^3(X)}) that $c_1^3(X)=-8r$ for some $0\leq r \leq \rho(X)-1$.

Conversely, if $X$ is of general type, then $Y$ is a minimal model of general type and so the Miyaoka--Yau inequality yields $c_1^3(Y)\geq\frac{8}{3}c_1c_2(Y) $, see \cite{tsuji,zhang}.
Again by (\ref{eq:c_1^3(X)}), 
\begin{align*} 
c_1^3(X)=c_1^3(Y)-8r \geq \frac{8}{3}c_1c_2(Y) -8r .
\end{align*}
The corollary follows therefore from $c_1c_2(X)=c_1c_2(Y)$ and $0\leq r\leq b_2(X)-1$.
\end{proof} 

\begin{remark} \label{rem:thm:ineq:2}
Corollary \ref{cor:ineq} shows that non-uniruled K\"ahler threefolds $X$ with spin structure satisfy $K_X^3\geq 0$.
This fails without the spin assumption.
For instance, if $C$ and $E$ denote smooth projective curves of genus $g$ and $1$, then $X=Bl_{\Delta_C\times 0}(C\times C\times E)$ satisfies $K_X^3=-4g+10$, which is negative for $g\geq 3$. 
 \end{remark}

\section{Examples of unbounded Chern classes and deformation types} \label{sec:unbounded}

By Theorem \ref{thm:deformations}, the deformation types of K\"ahler structures of general type on a given spin $6$-manifold are bounded; 
by Theorem \ref{thm:w2=0}, the same holds for the Chern numbers of arbitrary K\"ahler structures. 
In this section, we show that both results are sharp.

\begin{proposition} \label{prop:dolgachev}
There is a simply connected spin $6$-manifold $M$, 
which  admits a sequence of K\"ahler structures $X_i$, such that $c_1(X_i)\in H^2(M,\Z)$ is unbounded. 
\end{proposition}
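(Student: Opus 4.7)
The plan is to realise the desired $M$ as $S^2 \times S^2 \times S^2$ and, for each $i \geq 0$, to take $X_i := F_{2i} \times \CP^1$, where $F_{2i} = \mathbb{P}(\mathcal{O} \oplus \mathcal{O}(2i))$ is the $2i$-th Hirzebruch surface, viewed as a $\CP^1$-bundle over $\CP^1$. Each $X_i$ is a smooth simply connected projective threefold, and writing $\xi_i, f_i \in H^2(F_{2i},\Z)$ for the standard section and fibre classes and $h \in H^2(\CP^1,\Z)$ for the generator, the projective-bundle Chern-class formula yields $c_1(X_i) = 2\xi_i + (2i+2)\, f_i + 2h$. This class is divisible by $2$, so $X_i$ is spin.

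The first key step is to verify that every $X_i$ has the same underlying smooth $6$-manifold $M$. The change of basis $\xi_i' := \xi_i - i f_i$ brings the intersection form $\bigl(\begin{smallmatrix}2i & 1 \\ 1 & 0\end{smallmatrix}\bigr)$ of $F_{2i}$ to the standard hyperbolic form of $F_0 = S^2 \times S^2$; by the classical diffeomorphism classification of simply connected smooth rational surfaces (equivalently, by Smale's theorem recalled in Lemma~\ref{lem:p1(Mfs)} applied to the $S^2$-bundle $F_{2i} \to \CP^1$, whose $w_2$ vanishes since $2i$ is even), the surface $F_{2i}$ is orientation-preservingly diffeomorphic to $F_0$ for every $i \geq 0$. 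Multiplying by $\CP^1$ then yields $X_i \cong_{\mathrm{diff}} M$ for every $i$.

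Transporting $c_1(X_i)$ to $H^2(M,\Z) \cong \Z\langle h_1,h_2,h_3\rangle$ under the identification $\xi_i' \leftrightarrow h_1$, $f_i \leftrightarrow h_2$, $h \leftrightarrow h_3$ (so that $\xi_i \leftrightarrow h_1 + i\, h_2$) gives
\[
c_1(X_i) \;=\; 2\, h_1 + (4i+2)\, h_2 + 2\, h_3 \;\in\; H^2(M,\Z),
\]
whose coefficient of $h_2$ grows linearly in $i$. This unboundedness persists regardless of the choice of diffeomorphism $X_i \cong M$: the ring $H^*(M,\Z) = \Z[h_1,h_2,h_3]/(h_j^2)$ admits only signed permutations of $\{h_1,h_2,h_3\}$ as orientation-preserving graded ring automorphisms (for a linear $\phi$ sending $h_j$ to $\sum_k a_{jk} h_k$, the condition $\phi(h_j)^2 = 0$ forces exactly one nonzero entry in each row), so $\mathrm{Diff}^+(M)$ acts on $H^2(M,\Z)$ through a finite subgroup of $\mathrm{GL}_3(\Z)$, and the unordered multiset $\{2,\, 4i+2,\, 2\}$ of absolute values of coefficients of $c_1(X_i)$ is a genuine invariant that detects $i$. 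The only substantive obstacle is the diffeomorphism classification of Hirzebruch surfaces, which is however classical; everything else is routine cohomological bookkeeping.
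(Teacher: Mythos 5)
There is a fatal error in the Chern class computation, and it is not repairable within this construction. Your formula $c_1(X_i)=2\xi_i+(2i+2)f_i+2h$ is inconsistent with the intersection form $\bigl(\begin{smallmatrix}2i&1\\1&0\end{smallmatrix}\bigr)$ you use: it would give $c_1(F_{2i})^2=4(2i)+4(2i+2)=16i+8$, whereas $K_{F_n}^2=8$ for every Hirzebruch surface. With the positive section ($\xi_i^2=2i$) the correct formula is $c_1(F_{2i})=2\xi_i+(2-2i)f_i$; the coefficient $(2i+2)$ belongs to the negative-section convention $\xi_i^2=-2i$. In either consistent convention, the hyperbolic change of basis kills the growth: e.g.\ $c_1(F_{2i})=2(\xi_i'+if_i)+(2-2i)f_i=2\xi_i'+2f_i$, so the transported class is $2h_1+2h_2+2h_3$ for every $i$ --- constant, not unbounded. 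The claimed coefficient $4i+2$ arises only from mixing the two conventions. A quick sanity check confirms this: $c_1(X_i)^3=3\,c_1(F_{2i})^2\,c_1(\CP^1)=48$ for all $i$, while $(2h_1+(4i+2)h_2+2h_3)^3=24(4i+2)\,h_1h_2h_3$ is unbounded, so your transported class cannot be right. More conceptually, no choice of diffeomorphism can save the argument: the even classes of square $8$ in $H^2(S^2\times S^2,\Z)$ are exactly $\pm(2,2)$, so $c_1(F_{2i})$ lands in a two-element set under \emph{any} identification, and Hirzebruch surfaces (indeed, any family with $c_1$ of bounded divisibility) cannot produce unbounded first Chern classes.

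This is precisely the difficulty the paper's proof is designed to circumvent: it uses Dolgachev surfaces $S_q$, for which the torsion-free part of $c_1(S_q)$ is a nonzero class divisible by $q-2$, so its divisibility --- a quantity invariant under any isomorphism of $H^2$ --- genuinely grows. The $\CP^1$-bundles $\CP(L\oplus\OO_{S_q})$ are then shown to be mutually diffeomorphic via the h-cobordisms between the $S_q$, and a cohomology-ring argument (pinning down $\phi(H^2(S_3,\Z))$ via the cubic form) extracts the divisible class $c_1(S_{q})(2y+\omega)$ from $c_2(X_q)$ to conclude unboundedness. If you want to fix your approach, you would need base surfaces whose first Chern class has unbounded divisibility in $H^2$ of a fixed smooth $4$-manifold; simply connected projective surfaces with this property and constant diffeomorphism type are exactly the sort of exotic phenomenon the Dolgachev examples provide, and they do not occur among rational surfaces.
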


\begin{proof} 
Let $q\geq 3$ be an odd integer.
As in \cite{schreieder-tasin}, we consider the Dolgachev surface $S_q$, which is the elliptic surface, obtained from a general pencil of plane cubic curves $S\longrightarrow \CP^1$ by a logarithmic transformation of order $2$ and $q$ at two smooth fibres.
There is an h-cobordism $W_q$ between $S_q$ and $S_3$ which induces isomorphisms $H^2(S_q,\Z)\cong H^2(S_3,\Z)$.

We choose a cohomology class $\omega\in H^2(S_3,\Z)$  with $\omega^2\neq 0$ and such that the mod $2$ reduction of $c_1(S_3)+\omega$ vanishes.
Since $S_q$ and $S_3$ are h-cobordant, $w_2(S_q)\in H^2(S_3,\Z \slash 2\Z)$ does not depend on $q$ and so the mod $2$ reduction of $c_1(S_q)+\omega$ vanishes for all $q$. 
Since $h^{2,0}(S_q)=0$, there is a line bundle $L_q\in \Pic(S_q)$ with $c_1(L_q)=\omega$, see \cite[Section 2]{schreieder-tasin}.

Then the $\CP^1$-bundle $X_q:=\CP(L\oplus \mathcal O_{S_q})$ over $S_q$ is spin for all $q$.
Moreover, since $S_q$ is h-cobordant to $S_3$ and since $L\oplus \mathcal O_{S_q}$ extends as a complex vector bundle over the corresponding h-cobordism, it follows from the h-cobordism theorem that $X_q$ is diffeomorphic to $X_3$ for all $q$, cf. \cite{kotschick-JTOP,kotschick-advances,schreieder-tasin}.

From now on we think of $X_q$ as a complex projective structure on a fixed spin $6$-manifold $M$.
If $c_1(X_{q})\in H^2(M,\Z)$ is bounded, then, by Proposition \ref{prop:almostcx}, the almost complex structures underlying the $X_q$'s belong to finitely many homotopy classes. 
Let us therefore assume that the almost complex structures which underly $X_{q_1}$ and $X_{q_2}$ are homotopic.
Then there is an isomorphism of cohomology algebras,
$$
\phi:H^\ast(X_{q_1},\Z)\longrightarrow H^\ast(X_{q_2},\Z) ,
$$
which respects the Chern classes.
We use the following isomorphisms  
$$
H^2(X_{q_1},\Z)\cong H^2(S_3,\Z)\oplus y \Z\ \ \text{and}\ \ H^2(X_{q_2},\Z)\cong H^2(S_3,\Z)\oplus y\Z ,
$$
to identify the restriction of $\phi$ to degree two cohomology classes with an endomorphism of $H^2(S_3,\Z)\oplus y\Z$.
Using this identification, the ring structure is determined by $y^2=-\omega\cdot y$.
Since $\{\alpha\in H^2(S_3,\Z)\mid \alpha^2=\omega \alpha\}$ is an irreducible quadric, one easily proves that the elements of zero cube in $H^2(X_{q_i},\C)$ are given by the union of $H^2(S_3,\C)$ with an irreducible quadric.
In particular,
$
\phi(H^2(S_3,\Z))=H^2(S_3,\Z) $.
Since $\phi$ respects the ring structures, we deduce 
\begin{align} \label{eq:phi:H^4}
\phi(H^4(S_{q_1}, \Z))=H^4(S_{q_2},\Z) ,
\end{align}
where we identify $H^4(S_{q},\Z)$ via pullback with a subgroup of $H^4(X_q,\Z)$.
By assumptions, $\phi$ respects the second Chern class
$
c_2(X_{q_i})=c_2(S_{q_i})+ c_1(S_{q_i})(2y+\omega) 
$.
Since the Euler number of $S_q$ is independent of $q$, it follows from (\ref{eq:phi:H^4}) that
$$
\phi(c_1(S_{q_1})(2y+\omega))=c_1(S_{q_2})(2y+\omega).
$$
Multiplying this class with $y$ shows that it is non-zero.
Therefore, for $q_1$ fixed, $q_2$ is bounded, because $c_1(S_{q})$ is nonzero and divisible by $(q-2)$, see Proposition 3.7 in \cite[Section I.3]{friedman-morgan}. 
This proves the proposition. 
\end{proof}

\section*{Acknowledgments}
The first author is member of the SFB/TR 45.
During parts of this project, the second author was supported by the DFG Emmy Noether-Nachwuchsgruppe ``Gute Strukturen in der h\"oherdimensionalen birationalen Geometrie'' and thereby also member of the SFB/TR 45. 
We are very grateful to the referee for carefully reading a previous version of the paper.
We thank D.\ Kotschick for detailed comments and P.\ Cascini, M.\ Land, E.\ Sernesi, R.\ Svaldi and B.\ Totaro for conversations.


\begin{thebibliography}{9}   

\bibitem{amoros-etal}
J.\ Amor\'os, M.\ Burger, K.\ Corlette, D.\ Kotschick and D.\ Toledo, {\em Fundamental groups of compact K\"ahler manifolds}, Mathematical Surveys and Monographs \textbf{44} AMS, Providence, RI, 1996.

\bibitem{bartels-lück}
A.\ Bartels and W.\ L\"uck, {\em The Borel Conjecture for hyperbolic and CAT(0)-groups}, Ann.\ Math.\ \textbf{175} (2012), 631--689.

\bibitem{barth-etal}
W.\ P.\ Barth, K.\ Hulek, C.\ A.\ M.\ Peters and A.\ Van de Ven, {\em Compact Complex Surfaces}, Springer--Verlag, Berlin, 2004.



%
%


\bibitem{CHP16}
F.\ Campana, A.\ H\"oring and T.\ Peternell, {\em Abundance for K\"ahler threefolds}, Ann.\ Sci.\ \'Ec. Norm.\ Sup\'er.\ (4) \textbf{49} (2016), no.\ 4, 971--1025. 

\bibitem{cascini-tasin}
P.\ Cascini and L.\ Tasin, {\em On the Chern numbers of a smooth threefold}, to appear in Trans.\ Am.\ Math.\ Soc.\ (2017).

\bibitem{deligne-etal}
P.\ Deligne, P.\ Griffiths, J.\ Morgan and D.\ Sullivan, {\em Real Homotopy Theory of K\"ahler Manifolds}, Invent.\ Math.\ \textbf{29} (1975), 245--274.



\bibitem{friedman-morgan}
R.\ Friedman and J.W.\ Morgan, {\em On the diffeomorphism types of certain algebraic surfaces. I}, J.\ Differential Geom.\ \textbf{27} (1988), 297--369.




\bibitem{HM06}
C.D.\ Hacon and J.\ McKernan, {\em Boundedness of pluricanonical maps of varieties of general type}, Inv.\ Math.\ \textbf{116} (2006), 1--25.



\bibitem{HK57}
F.\ Hirzebruch, K.\ Kodaira, {\em On the complex projective spaces}, J.\ Math.\ Pures Appl.\ \textbf{36} (1957), 201--216.

\bibitem{HP15}
A.\ H\"oring and T.\ Peternell, {\em Mori fibre spaces for K\"ahler threefolds},  J.\ Math.\ Sci.\ Univ.\ Tokyo 22 (2015), no.\ 1, 219--246.

\bibitem{HP16}
A.\ H\"oring and T.\ Peternell, {\em Minimal models for K\"ahler threefolds},  Invent.\ Math.\ 203 (2016), no.\ 1, 217--264. 






\bibitem{iskovski}
V.A.\ Iskovskikh and Y.G.\ Prokhorov, {\em Fano varieties}, from: "Algebraic geometry, V", (I.R.\ Shafarevich, editor), Encyclopaedia Math.\ Sci\. 47, Springer (1999) 1--247.




\bibitem{KM87}
Y.\ Kawamata and K. Matsuki, {\em The number of the minimal models for a 3-fold of general type is finite},  Math.\ Ann.\ 276 (1987), no.\ 4, 595--598.

\bibitem{kollar}
J.\ Koll\'ar, {\em Low degree polynomial equations: arithmetic, geometry and topology}, European Congress of Mathematics, Vol. I (Budapest, 1996), 255--288, Progr. Math., 168, Birkh\"auser, Basel, 1998.

\bibitem{kollar89}
J.\ Koll\'ar, {\em Flops},  Nagoya Math.\ J.\ 113 (1989), 15--36.

\bibitem{KM92}
J.\ Koll\'ar and S.\ Mori, {\em Classification of three-dimensional flips},  J.\ Amer.\ Math.\ Soc.\ 5 (1992), no.\ 3, 533--703.



\bibitem{kotschick-JTOP}
D.\ Kotschick, {\em Chern numbers and diffeomorphism types of projective varieties}, J.\ of Topology \textbf{1} (2008), 518--526.

\bibitem{kotschick-PNAS}
D.\ Kotschick, {\em Characteristic numbers of algebraic varieties}, Proc.\ Natl.\ Acad.\ Sci.\ USA \textbf{106} (2009), 10114--10115. 

\bibitem{kotschick-advances}
D.\ Kotschick, {\em Topologically invariant Chern numbers of projective varieties}, Adv.\ Math.\ \textbf{229} (2012), 1300--1312.

\bibitem{kotschick-survey}
D.\ Kotschick, {\em Updates on Hirzebruch's 1954 Problem List},  arXiv:1305.4623.

\bibitem{kotschick-schreieder}
D.\ Kotschick and S.\ Schreieder, {\em The Hodge ring of K\"ahler manifolds}, Compositio Math.\ \textbf{149} (2013), 637--657.






\bibitem{lebrun}
C.\ LeBrun, {\em Topology versus Chern numbers of complex 3-folds}, Pacific J.\ Math.\ \textbf{191} (1999), 123--131.

\bibitem{LW90}
A.S.\ Libgober and J.W.\ Wood. {\em Uniqueness of the complex structure on K\"ahler manifolds of certain homotopy types},\ J. Differential Geom. \textbf{32} (1990), 139--154.



\bibitem{miyaoka87}
Y.\ Miyaoka, {\em The Chern classes and Kodaira dimension of a minimal variety},
Alg.\ Geom.\ Sendai 1985, Adv.\ Stud.\ Pure Math., \textbf{10} (1987), 449--476. 

\bibitem{MST16}
D.\ Martinelli, S.\ Schreieder and L. Tasin, {\em On the number and boundedness of log minimal models of general type}, arXiv:1610.08932.

\bibitem{mori-annals}
S.\ Mori, {\em Threefolds whose canonical bundles are not numerically effective}, Annals Math.\ \textbf{116} (1983), 133--176.

\bibitem{mori-flips}
S.\ Mori, {\em Flip theorem and the existence of minimal models for 3-folds}, J.\ Amer.\ Math.\ Soc.\ \textbf{1}  (1988), 117--253.

\bibitem{mori-mukai}
S.\ Mori and S.\ Mukai, {\em On Fano threefolds with $B_{2}>=2$},  in "Algebraic varieties and analytic varieties" (Tokyo, 1981), Adv.\ Stud.\ Pure Math.\ \textbf{1} (1983), 101--129.

\bibitem{mori-prokhorov}
S.\ Mori and Y.\ Prokhorov, {\em On $\Q$-conic bundles}, Publ.\ Res.\ Inst.\ Math.\ Sci. \textbf{44} (2008), 315--369.
%
%

\bibitem{novikov}
S.P.\ Novikov, {\em On manifolds with free abelian fundamental group and their application}, Izv.\ Akad.\ Nauk SSSR Ser.\ Mat.\ \textbf{30} (1966), 207--246.

\bibitem{OV95}
C.\ Okonek and A.\ Van de Ven, {\em Cubic forms and complex 3-folds},  Enseign.\ Math. (2) 41 (1995), no.\ 3-4, 297--333.


%
%


 



\bibitem{schreieder-tasin}
S.\ Schreieder and L.\ Tasin, {\em Algebraic structures with unbounded Chern numbers}, J.\ of Topology \textbf{9} (2016), 849--860.

\bibitem{schreieder-GT}
S.\ Schreieder, {\em On the construction problem for Hodge numbers}, Geometry \& Topology \textbf{19} (2015), 295--342.



\bibitem{siu} 
Y.-T.\ Siu, {\em Invariance of plurigenera}, Invent.\ Math.\ \textbf{134} (1998), no.\ 3, 661--673. 

\bibitem{smale}
S.\ Smale, {\em Diffeomorphisms of the $2$-sphere}, Proc.\ Amer.\ Math.\ Soc.\ \textbf{10} (1959), 621--626.


\bibitem{takayama}
S.\ Takayama, {\em Pluricanonical systems on algebraic varieties of general type},  Invent.\ Math.\ \textbf{165} (2006), no.\ 3, 551--587.


\bibitem{tsuji}
H.\ Tsuji, {\em Stability of tangent bundles of minimal algebraic varieties}, Topology \textbf{27} (1988), 429--442.


\bibitem{tsuji2}
H.\ Tsuji, {\em Pluricanonical systems of projective varieties of general type. I},  Osaka J.\ Math.\ \textbf{43} (2006), no.\ 4, 967--995.


\bibitem{voisin-kodaira}
C.\ Voisin, {\em On the homotopy types of compact K\"ahler and complex projective manifolds}, 
Invent.\ Math.\ \text{157} (2004), 329--343. 


\bibitem{wall}
C.T.C.\ Wall, {\em On the classification problem in differential topology, V}, Invent.\ Math.\ \textbf{1} (1966),  355--374. 

\bibitem{yau-problem}
S.-T.\ Yau, {\em Open problems in geometry}, in S.S.\ Chern: A Great Geometer of the Twentieth Century, ed.\ S.-T.\ Yau, International Press (1992).
 

\bibitem{yau}
S.-T.\ Yau, {\em Calabi's conjecture and some new results in algebraic geometry}, Proc.\ Natl.\ Acad.\ Sci.\ USA \textbf{74} (1977), 1798--1799. 


\bibitem{zhang}
Y.\ Zhang, {\em Miyaoka-Yau inequality for minimal projective manifolds of general type}, Proc.\ Amer.\ Math.\ Soc.\ \textbf{137} (2009), 2749--2754.

\bibitem{zhubr}
A.V.\ Zhubr, {\em Classification of simply connected six-dimensional spinor manifolds}, (English) Math.\ USSR, Izv.\ \textbf{9} (1975), (1976), 793--812 .



\end{thebibliography}
\end{document}